\newtheorem{theo}{Theorem}
\newtheorem{prop}{Proposition}
\newtheorem{lem}{Lemma}
\newtheorem{definition}{Definition}
\newtheorem*{remark}{Remark}
\newtheorem{coro}{Corollary}
\newcommand{\bbR}{\mathbb{R}}
\newcommand{\bbZ}{\mathbb{Z}}
\newcommand{\set}[1]{\left\{#1\right\}}
\newcommand{\eps}{\varepsilon}
\newcommand{\lf}{\lfloor}
\newcommand{\rf}{\rfloor}
\newcommand{\card}{\mathrm{card}}
\newcommand{\Z}{\mathbb{Z}}
\newcommand{\Cross}{\Z^{\times}}
\newcommand{\Zero}{\mathbf{0}}
\title{The shape of large balls in highly supercritical percolation}
\author{A-L. Basdevant, N. Enriquez, L. Gerin, J-B. Gou\'er\'e}
\begin{document}
\maketitle
\begin{abstract}
We exploit a connection between distances in the infinite percolation cluster, when the parameter is close to one, and the discrete-time TASEP on $\mathbb{Z}$. This shows that when the parameter goes to one, large balls in the cluster are asymptotically shaped near the axes like arcs of parabola.
\end{abstract}

\noindent{\bf {\textsc MSC 2010 Classification}:} 60K35,82B43.\\
\noindent{\bf Keywords:} first-passage percolation, supercritical percolation, TASEP.

\section{Introduction}

Our main issue in this paper is to describe the shape of large balls in the infinite two-dimensional percolation cluster, when the percolation parameter is close to one. This problem is closely related to first-passage percolation, a model introduced in the 60's by Hammersley and Welsh \cite{HW} in which one estimates the minimal distance $D(\Zero,x)$ between the origin $\Zero$ and a given point $x$ of $\Z^2$, when edges have i.i.d. positive finite lengths. Distances in the cluster correspond in this framework to the extreme case where edges have lengths $1$ with probability $p\in(0,1)$ and infinite length with probability $1-p$. We refer to \cite{Blair} for a recent survey on first passage percolation and shape theorems.


In first passage percolation, Kingman's subadditive ergodic theorem is the crucial tool to study the asymptotics of distances between distant points.
Garet and Marchand (\cite{GM}, Th.3.2) adapted this argument to the case where edges may have infinite length. They proved the existence, for all $z$ in $\bbR^2$, of a constant $\mu(z)$ such that, if we denote by $[nz]$ one of the closest lattice points to $nz$,  on the event\footnote{We write $v\leftrightarrow w$ if $v,w\in\Z^2$ belong to the same connected component, and $v\leftrightarrow \infty$ if $v$ is in the infinite cluster.} $\set{\Zero\leftrightarrow \infty}$ and along the subsequence
$\set{\Zero\leftrightarrow [nz]}$ , we have a.s.
\begin{equation}\label{Eq:mu}
\lim_{\substack{n \to \infty \\\Zero\leftrightarrow [nz]}} \frac{D(\Zero,[nz])}{n}=\mu(z).
\end{equation}


Very few is known about $\mu$, except when $z$ belongs to the \emph{oriented percolation cone}.
This cone is defined as the set of points $z$ such that there exists with probability $1$ an infinite open path in the direction of $z$ taking only east/north edges.
For all $z$ in this cone, $\mu(z)$ is obviously equal to $|z|_1$.
Marchand \cite{Regine} showed that $\mu$ differs from $|.|_1$ outside this cone and previously Durrett \cite{PercoOrientee} had shown that this cone is delimited near the $x$-axis by a line $y=t_px$ where $t_p=1-p+\mathrm{o}(1-p)$.
Hence, we are interested in this paper in the remaining region $|y|\leq (1-p)x$.
\begin{theo}\label{TheoPrincipal}
For all $0\leq \lambda\leq 1$, on the event $\set{\Zero\leftrightarrow \infty}$, almost surely,
\begin{equation*}
\mu_p(\lambda):=\lim_{\substack{n \to \infty \\\Zero \leftrightarrow [n(1,\lambda(1-p))]}} \frac{D\left(\Zero,[n(1,\lambda(1-p))]\right)}{n}
=1+(1-p) \frac{1+\lambda^2}{2}+\mathcal{O}\left((1-p)^2\right).
\end{equation*}
\end{theo}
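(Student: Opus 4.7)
The plan is to prove matching upper and lower bounds on the chemical distance of the form $n\bigl(1 + (1-p)(1+\lambda^2)/2 + O((1-p)^2)\bigr)$. The central idea, advertised in the abstract, is to relate optimal paths in the supercritical percolation cluster to trajectories of a discrete-time TASEP on $\mathbb{Z}$, and then read off the shape function from the hydrodynamic limit of this TASEP.

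First I would reduce to paths confined to a thin horizontal strip. For $p$ close to $1$, any geodesic between $\mathbf{0}$ and $[n(1,\lambda(1-p))]$ stays, with high probability, in a strip of height $O((1-p)n)$, and the additional cost incurred by any excursion outside such a strip is of second order in $(1-p)$. Within the strip, west edges are wasteful (each west--east pair costs $+2$), so one may restrict to paths made only of east, north and south edges, up to an error $O((1-p)^2 n)$. Such a path is parameterized by the sequence $(h_c)_{c=1}^n$ of heights at which the successive east edges are used; its length equals $n + |h_1| + \sum_{c=2}^n |h_c-h_{c-1}| + |k-h_n|$, where $k = \lfloor \lambda(1-p) n \rfloor$, subject to the admissibility condition that each east edge $((c-1,h_c),(c,h_c))$ be open.

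The key combinatorial step is to identify this constrained optimization problem with the dynamics of a discrete-time TASEP on $\mathbb{Z}$. The closed east edges at each fixed height play the role of forced jumps of TASEP particles, and the height process of the optimal path corresponds to the trajectory of a distinguished particle; the constraint $h_n \approx \lambda(1-p) n$ then becomes a condition on the particle's asymptotic position at time $n$. Once this correspondence is in place, applying the hydrodynamic limit of discrete-time TASEP yields an explicit expression for the asymptotic minimum length, and hence for $\mu_p(\lambda)$. The announced formula is then obtained by Taylor expanding this expression around $p=1$: the zeroth order term is $1$, the linear correction works out to $(1-p)(1+\lambda^2)/2$, and higher order contributions are absorbed into the $O((1-p)^2)$ remainder.

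The main obstacle is to make the TASEP bijection precise enough to produce the quantitative $O((1-p)^2)$ error bound rather than merely a qualitative limit. A secondary difficulty is the matching lower bound: one must rule out the possibility that some exotic geodesic (with many long west--east back-tracking sequences, or with very large vertical excursions) beats the TASEP-based construction. The existence of the limit $\mu_p(\lambda)$ itself is already provided by the Garet--Marchand result recalled in \eqref{Eq:mu}; only its value needs to be computed.
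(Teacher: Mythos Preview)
Your proposal has the right spirit (reduce to a particle system and invoke its hydrodynamics), but it is missing the central device of the paper and, as a result, mis-identifies which direction is easy and which is hard.

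The paper does \emph{not} work directly in the percolation cluster, nor does it restrict to a strip. Instead it introduces an auxiliary graph $\mathbb{Z}^\times$ (the ``Cross Model''): all vertical edges are open, diagonal edges of length~2 are added and open, and only horizontal edges are random. In this model the column-to-column evolution of the distance function is \emph{exactly} a discrete-time TASEP on $\mathbb{Z}$ (Proposition~1 and Lemma~\ref{Lem:LienTASEP-PPP}), so Johansson's LPP asymptotics give a closed-form limit $f(\lambda,\varepsilon)=2-\sqrt{1-\varepsilon(1+\lambda^2)+\lambda^2\varepsilon^2}$ for the Cross-Model distance (Corollary~\ref{Coro:AsymptotiqueCrossModel}). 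The lower bound on $\mu_p(\lambda)$ is then a one-line coupling: adding diagonals can only shorten distances, so $D\ge D^\times$. There is no need to rule out ``exotic geodesics with long west--east back-tracking''; that difficulty simply does not arise.

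Conversely, the genuinely delicate direction is the \emph{upper} bound, and your sketch does not address its real obstacle. One must take a Cross-Model geodesic $\pi^\times$ (constructed canonically from the particle trajectories, Lemma~\ref{Lem:CheminSurLaVague}), replace each diagonal step by a vertical-plus-horizontal detour, and then argue that the resulting $\mathbb{Z}^2$-path $\pi$ has few closed edges and that each such ``bad'' edge can be bypassed at bounded expected cost. The subtle point is a conditional-independence issue: the path $\pi$ is $\mathcal{T}$-measurable (it depends on the TASEP, hence on the horizontal edges), so one must show that the new edges introduced by the detours are \emph{unbiased} given $\mathcal{T}$ (Lemma~\ref{Lem:MajoMauvaisCarres}), and that closed dual clusters through bad edges are small even though some nearby edges are biased (Lemma~\ref{LemmeNath}). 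This yields $\mathbb{E}|\Delta\mathcal{C}(\pi)|\le Cn\varepsilon^2$, which is exactly the second-order error you need. Your plan, which parameterizes E/N/S paths by the heights $(h_c)$ and hopes to read off a TASEP directly, would have to confront closed \emph{vertical} edges in the cluster (they are not rare along the $O(n\varepsilon)$ vertical steps of the path) and the lack of an exact particle-system correspondence in raw percolation; the Cross Model is precisely the gadget that removes both issues.
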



Let us also note that we actually obtain for all $p$ the following non-asymptotic lower bound for $\mu_p$, it is sharp when $p$ goes to one. It is a consequence of Corollary \ref{Coro:AsymptotiqueCrossModel} in Section \ref{Sec:Markov}.
\begin{theo}
For all $p>1/2$ and $0\leq \lambda\leq 1$,
$$
\mu_p(\lambda)\geq 2-\sqrt{1-(1-p)(1+\lambda^2)+\lambda^2(1-p)^2}.
$$
\end{theo}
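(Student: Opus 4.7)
My plan is to deduce the statement from Corollary~\ref{Coro:AsymptotiqueCrossModel}, announced in Section~\ref{Sec:Markov}, which gives an exact asymptotic for first passage times of the Cross model on $\Cross$ via its identification with a discrete-time TASEP on $\Z$. The content of the proof is thus the transfer of this Cross-model asymptotic to $\mu_p(\lambda)$ in the direction of a \emph{lower} bound.

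First, I would set up a coupling between supercritical bond percolation on $\Z^2$ and the Cross model so that
\[
D(\Zero,[n(1,\lambda(1-p))]) \;\geq\; D_{\Cross}(\Zero,[n(1,\lambda(1-p))]).
\]
Since we are after a lower bound on $\mu_p(\lambda)$, the coupling must go in this direction: the Cross model should be the more permissive of the two, so that every path in the infinite cluster is an admissible Cross-model path of equal or lesser length. Concretely, I would try to realize $D_{\Cross}$ as a functional of the horizontal-edge configuration by identifying closed horizontal edges on each row with particles of a TASEP whose first passage time coincides with $D_{\Cross}$. Any geodesic in the percolation cluster then supplies an admissible strategy for that first passage time, producing the required inequality.

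Second, I would invoke Corollary~\ref{Coro:AsymptotiqueCrossModel} to obtain
\[
\lim_{n\to\infty}\frac{D_{\Cross}(\Zero,[n(1,\lambda(1-p))])}{n}
= 2-\sqrt{p\bigl(1-\lambda^{2}(1-p)\bigr)}
= 2-\sqrt{1-(1-p)(1+\lambda^{2})+\lambda^{2}(1-p)^{2}}.
\]
Combined with the almost-sure convergence~\eqref{Eq:mu} on $\set{\Zero\leftrightarrow\infty}$, this yields the announced bound. As a sanity check, at $\lambda=1$ the right-hand side equals $2-p=|(1,1-p)|_{1}$, which is the value of $\mu$ inside the Durrett oriented percolation cone; this confirms that the bound is tight to leading order as $p\to1$ and, in particular, recovers the $\mathcal{O}((1-p)^{2})$ expansion of Theorem~\ref{TheoPrincipal}.

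The main obstacle is the coupling in the first step: the Cross model must be simultaneously (i)~simple enough to admit the exact TASEP representation used in Corollary~\ref{Coro:AsymptotiqueCrossModel}, and (ii)~permissive enough for its distance to be a genuine lower bound on the percolation distance. The restriction $0\leq\lambda\leq 1$ should enter precisely here, guaranteeing that the characteristic of the associated hydrodynamic equation stays in the regime where the explicit Burgers-type flux solution applies, and that no westward steps of the percolation geodesic need to be accounted for in the comparison.
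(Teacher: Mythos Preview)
Your plan is correct and matches the paper's: couple so that the Cross Model dominates, then invoke Corollary~\ref{Coro:AsymptotiqueCrossModel}. Two clarifications, though. First, the coupling is simpler than you suggest and does not involve the TASEP at all: under the natural coupling on horizontal edges, every open edge of the percolation configuration is open in the Cross Model (horizontal edges have the same state, and \emph{all} vertical and diagonal edges are open there), so any percolation path is a Cross-Model path of the same length, giving $D\geq D^\times$ immediately---no need to interpret geodesics as ``admissible strategies'' for a first passage time, and westward steps are irrelevant. Second, the restriction $0\le\lambda\le 1$ does not enter in the coupling but in the hypothesis $|y|\le x\eps$ of Proposition~\ref{Prop:CourantTASEP} (and hence Corollary~\ref{Coro:AsymptotiqueCrossModel}); outside this range the TASEP current is identically zero and the formula no longer holds. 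Also, the TASEP particles in the paper are \emph{not} the closed horizontal edges: they encode the increments of $D^\times$ along columns.
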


Outside the cone, the exact limiting shape of large balls remains unknown.
Very recently, Auffinger and Damron \cite{Auffinger} showed that the corresponding limiting shape in first passage percolation is differentiable at the edge of the cone, thereby excluding the possibility of a polygon. It is believed that the limiting shape is strictly convex near axes, our result roughly says that, when $p$ is close to one, the four corners of $L^1$ balls are replaced by curves looking like arcs of parabola, as in the (schematic) figure below.
\begin{center}
\includegraphics[width=70mm]{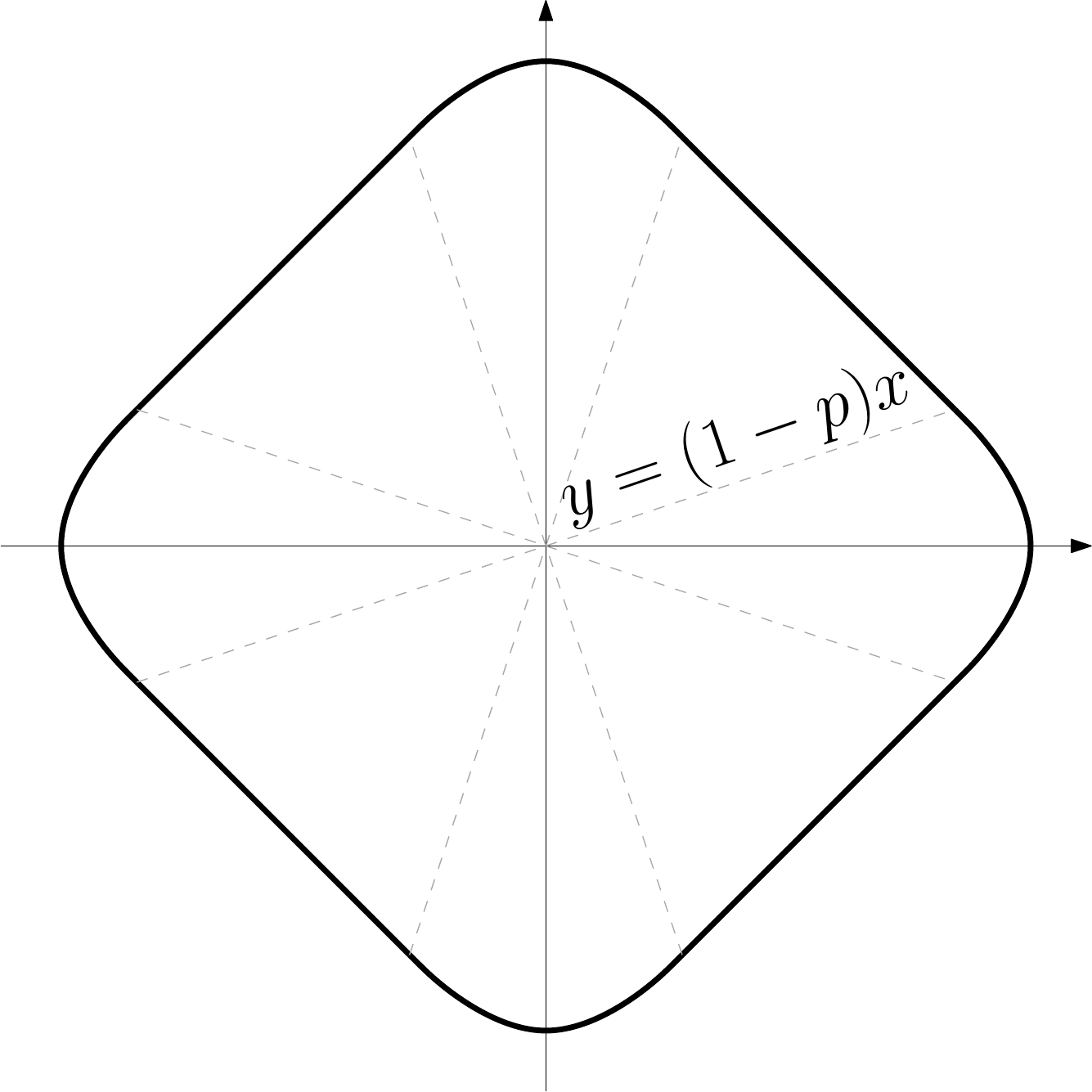}
\end{center}


The general strategy of the proof is based on a connection introduced in \cite{BEG} between the discrete-time \emph{totally asymmetric simple exclusion process} (TASEP) and distances on the percolation cluster. The TASEP is used by physicists as a simple model for nonequilibrium phenomena, it is known to be connected to a large class of combinatorial models : the corner-growth model, last passage percolation, random matrices,\dots (see \cite{SurveyRSK} for a survey). It seems that this connection with distances in the infinite percolation cluster appeared for the first time in \cite{BEG}.

The exact correspondence with TASEP on $\bbZ$ holds with a simplified model of percolation, which is described in Section \ref{Sec:Markov}. The lower bound for $\mu_p$ follows easily. For the upper bound, we first need a careful analysis of geodesics in the simplified model in order to show that they can be modified into an open path in the percolation cluster.

This strategy differs notably from that of \cite{BEG} where we worked in a large box around the $x$-axis, and thus the correspondence was with the TASEP on a finite interval. This restricted the analysis to points whose height was sublinear in $n$ and therefore gave results only for $\lambda=0$.
We also feel that this correspondence with TASEP is more transparent in the present paper than in \cite{BEG} and that it allows us to use more efficiently some known results on TASEP.


\section{The connection with TASEP}\label{Sec:Markov}

\subsection{Percolation in the cross model}

As a first step, we study a two-dimensional random graph in which distances to the origin behave much like distances in the infinite percolation cluster and are strongly connected the TASEP.

Here is the context we will deal with in the whole section. Let $\Cross$ be the graph on the vertices of $\Z^2$, with three kinds of edges:
\begin{itemize}
\item Vertical edges $\set{(i,j)\to(i,j+1),i\in \Z,j\in \Z}$;
\item Horizontal edges $\set{(i,j)\to(i+1,j),i\in \Z,j\in \Z}$;
\item Diagonal edges $\set{(i,j)\to(i+1,j+1)\text{ and }(i,j)\to(i+1,j-1)}$.
\end{itemize}
We assign length $1$ to each vertical and horizontal edge, and length $2$ to each diagonal edge.
We now set $\eps=1-p>0$ and call \emph{Cross Model} the percolation on $\Cross$ defined by:
\begin{itemize}
\item[\emph{(i)}] Diagonal and vertical edges are all open,
\item[\emph{(ii)}] Each horizontal edge is open (resp. closed) independently with probability $1-\varepsilon$ (resp. $\varepsilon$).
\end{itemize}
\begin{remark}
Let us first motivate this simplified model.
\begin{enumerate}
\item In classical percolation on $\bbZ^2$ with $\eps$ close to zero, a very large proportion (greater than $1-6\eps^2$) of unit squares of $\bbZ^2$ have at most one closed edge. In such squares, the addition of two diagonal edges of length $2$ does not change the time needed to cross the square from one corner to the other.
\item The opening of vertical edges should not be significant at first order since, as we will see later, a typical geodesic between $\Zero$ and $(n,n\lambda\eps)$ in $\Cross$ takes less than $2n\eps$ vertical edges, a proportion $\eps$ only of them being closed in the original model of percolation.
\end{enumerate}
\end{remark}

For $(i,j)\in\Z^2$, let $D^{\times}(i,j)$ be the distance between $(0,0)$ and $(i,j)$ in the Cross Model (see an example in Fig. 1).
Since vertical and diagonal edges are open, every point in $\Cross$ is connected in the Cross Model to $\Zero$, hence $D^{\times}(i,j)$ is finite for every $(i,j)$.
Let us write down some obvious consequences of the construction: for $i\geq 0$,
\begin{itemize}
\item All the geodesics joining $\Zero$ to $(i,j)$ only make N,NE,E,SE,S steps.
\item Along each vertical edge $|D^{\times}(i,j)-D^{\times}(i,j+1)|=1$.
\item Along each open horizontal edge, $D^{\times}(i+1,j)=D^{\times}(i,j)+1$.
\item Along each closed horizontal edge, $D^{\times}(i+1,j)=D^{\times}(i,j)+1$ or $+3$.
\item Along each diagonal edge, $D^{\times}(i+1,j\pm 1)=D^{\times}(i,j)+0$ or $+2$.
\end{itemize}
We also set $\mathbf{D}^{\times}_i$ for the (infinite) $i$-th column of distances $\set{D^{\times}(i,j),j\in \Z}$. Note that obviously
 $\mathbf{D}^{\times}_0=\left(\dots ,-2,-1,0,1,2,\dots \right)$.
The aim of the present section is to identify the law of the Markov chain $\left(\mathbf{D}^{\times}_i\right)_{i\geq 0}$.

\begin{figure}[h!]
\label{Fig:TASEP}
\begin{center}
\includegraphics[width=70mm]{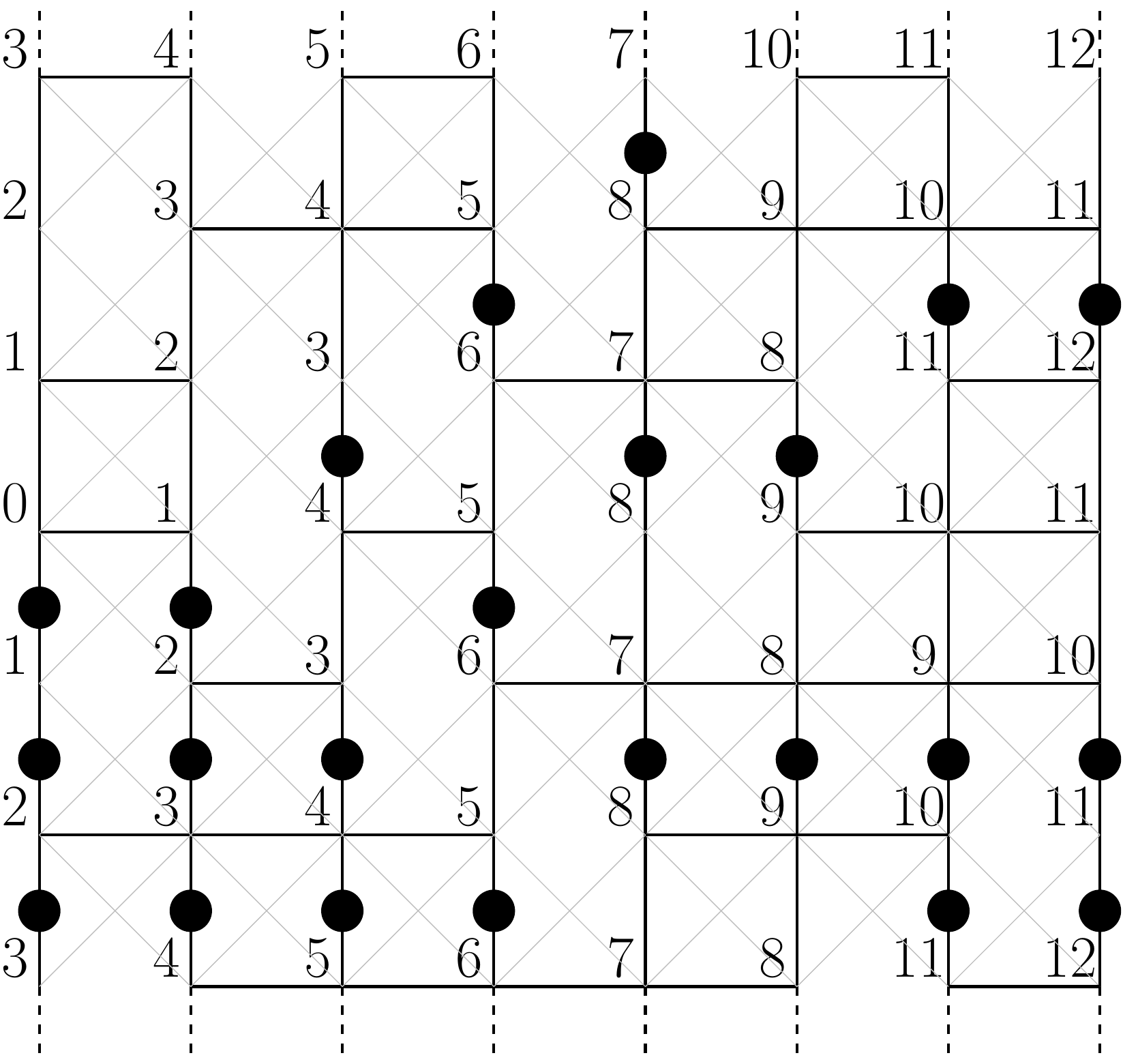}
\caption{A configuration of percolation in $\Cross$ with the associated distances $\mathbf{D}^{\times}_0,\dots \mathbf{D}^{\times}_7$, together with particles. Note the importance of diagonal edges: $D^\times(2,1)=3$ thanks to the diagonal edge $(1,0)\to(2,1)$.}
\end{center}
\end{figure}

To do so, we introduce a particle system closely related to the process $(\mathbf{D}^{\times}_i)_{i\ge 0}$.
Let us consider the state space $\set{\bullet,\circ}^{\Z}$ (identified to $\set{1,0}^{\Z}$), and denote its elements in the form
$$
(\dots,y^{-2},y^{-1},y^{0},y^1,y^2,\dots).
$$
Let $(\mathbf{Y}_i)_{i\geq 0}$ be the process with values in
$\set{\bullet,\circ}^{\Z}$  defined as follows :
\begin{equation*}\forall j\in \Z,\qquad
Y_i^j =
\begin{cases}
&\bullet =1 \text{ if } D^{\times}(i,j)= D^{\times}(i,j-1) -1.\\
&\circ =0  \text{ if } D^{\times}(i,j)= D^{\times}(i,j-1) +1.
\end{cases}
\end{equation*}
Let say that the site $j$ is \emph{occupied} by a particle at time $i$ if $Y^j_i=\bullet$ and \emph{empty} otherwise.
We think about a particle at site $j$ at time $i$ as being actually located on the edge $(i,j-1)-(i,j)$ as drawn on our pictures.


The main observation is that if we see time going from left to right, then the displacement of particles follows a discrete-time TASEP on $\Z$, that we define now:

\begin{definition}
The discrete-time \emph{Totally Asymmetric Simple Exclusion Process} (TASEP) on $\Z$ with parameter $\alpha$ is the Markov chain with state space $\set{\bullet,\circ}^{\Z}$ with initial condition $y_0$ defined by
$$
\dots,y^{-3}_0=y^{-2}_0=y^{-1}_0=y^0_0=\bullet,\qquad \circ=y^1_0=y^2_0=y^3_0=\dots
$$
and whose evolution is as follows: at time $t+1$, for each $j$, a particle at position $j$ (if any) moves one step forward if the site $j+1$ is empty at time $t$, with probability $\alpha$ and independently from the other particles.
\begin{center}
\includegraphics[width=50mm]{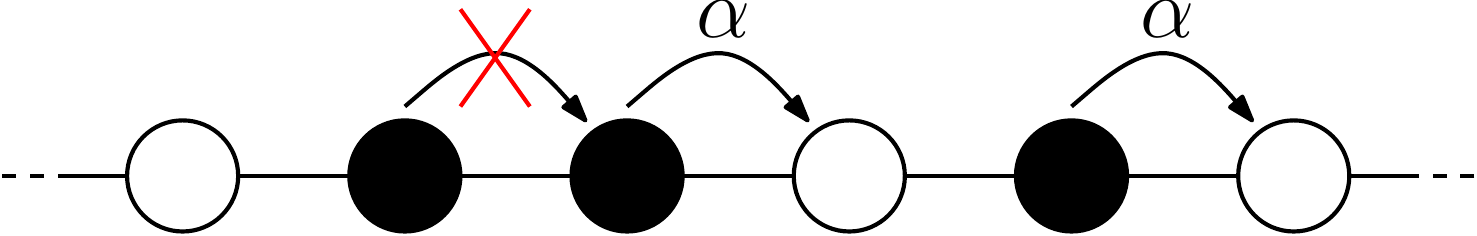}
\end{center}
\end{definition}

\begin{prop} The process $(\mathbf{Y}_i)_{i\geq 0}$ has the law of TASEP on $\bbZ$ with parameter $\eps$.
\end{prop}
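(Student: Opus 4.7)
The starting observation is that $\mathbf{Y}_i$ is a deterministic functional of the column $\mathbf{D}^{\times}_i$, and the horizontal edges between columns $i$ and $i+1$ are independent of everything that happened up to column $i$; so $(\mathbf{Y}_i)_{i\geq 0}$ is automatically a Markov chain driven by i.i.d.\ noise. The initial condition matches the TASEP one since $D^{\times}(0,j)=|j|$ gives $Y_0^j=\bullet$ for $j\leq 0$ and $Y_0^j=\circ$ for $j\geq 1$. The whole content of the proposition is therefore to identify the one-step transition kernel.

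The plan for the transition is to write $D^{\times}(i+1,\cdot)$ explicitly in terms of $a_k:=D^{\times}(i,k)$ and the horizontal edges of the strip. Because $\Cross$ has no westward edges, every geodesic from $\Zero$ to $(i+1,j)$ crosses into column $i+1$ exactly once, at some site $(i+1,k)$, via either a horizontal edge (of length $1$ if open, $\infty$ if closed) or one of the two diagonal edges (of length $2$), and then proceeds purely vertically within column $i+1$. Letting $\ell_{i,k}\in\{1,\infty\}$ denote the horizontal-edge length at height $k$, this decomposition yields
\begin{equation*}
D^{\times}(i+1,j) \;=\; \min_{k\in\Z}\bigl(c_k+|j-k|\bigr),
\qquad
c_k \;:=\; \min\bigl\{a_k+\ell_{i,k},\; a_{k-1}+2,\; a_{k+1}+2\bigr\}.
\end{equation*}

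Since $|a_{k+1}-a_k|=1$, a short case analysis shows $c_k=a_k+1$ unless $k$ is a local minimum of $a$ \emph{and} the horizontal edge at $k$ is closed, in which case $c_k=a_k+3$. The condition that $k$ be a local minimum of $a$ is exactly the pattern $Y_i^k=\bullet,\,Y_i^{k+1}=\circ$, i.e.\ the presence of a movable TASEP particle at $k$. Evaluating $b_j:=\min_k(c_k+|j-k|)$ then shows that $b_j=a_j+1$ at every site, except that at each such ``excited'' local minimum $k_0$ one has $b_{k_0}=a_{k_0}+3$; the height-$2$ bump at $k_0$ is absorbed by the slope-$1$ smoothing at distance $1$, so $b_{k_0\pm 1}=a_{k_0\pm 1}+1$ remains at the baseline, and because any two local minima of $a$ are at distance at least $2$, distinct bumps never interact. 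Translated back to $\mathbf{Y}$, each excited minimum transforms $(Y_i^{k_0},Y_i^{k_0+1})=(\bullet,\circ)$ into $(Y_{i+1}^{k_0},Y_{i+1}^{k_0+1})=(\circ,\bullet)$ while leaving every other coordinate unchanged, which is exactly the TASEP move ``the movable particle at $k_0$ hops one step right''. As horizontal edges are closed independently with probability $\eps$, each movable particle hops independently with probability $\eps$, matching the TASEP transition rule with $\alpha=\eps$.

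The only delicate step will be the localization claim: checking that the height-$2$ bump of $c$ at one excited minimum propagates to $b$ at that single coordinate only, and that bumps arising at distinct (and possibly adjacent) excited local minima do not interfere in the min-plus convolution. Everything else is bookkeeping once the formula for $b_j$ above is in hand.
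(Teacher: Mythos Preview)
Your argument is correct and follows the same idea as the paper, which gives only a brief sketch (deferring to \cite{BEG}): both identify that the particle configuration can change between columns $i$ and $i+1$ only at a local minimum of $D^\times(i,\cdot)$, and that the particle there jumps exactly when the horizontal edge is closed. Your min--plus recursion $D^\times(i+1,j)=\min_k(c_k+|j-k|)$ together with the case analysis of $c_k$ is simply a clean way to make that observation rigorous.

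One small imprecision: $\Cross$ is an undirected graph, so it is not that it ``has no westward edges'' but rather that \emph{geodesics} from $\Zero$ to points with nonnegative abscissa take only N, NE, E, SE, S steps (this is listed among the paper's ``obvious consequences''). That is what justifies your recursion: the displayed formula is trivially an upper bound, and the geodesic property gives the matching lower bound. The localization step you flag as delicate is fine exactly as you describe, since at an excited local minimum $k_0$ one has $a_{k_0\pm1}=a_{k_0}+1$, whence $c_k+|k_0-k|\ge a_{k_0}+3$ for all $k$, and two local minima are always at distance at least $2$.
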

Proposition 3 in \cite{BEG} states the same result in the framework of a finite interval of $\bbZ$, and the proof is identical.
Let us say however some words about it. The main point is that modifications in the particle configuration might only occur when a site $(i,j)$ at some distance $\ell$ lies between two sites of the line $\set{i}\times\bbZ$  which are at a distance $\ell +1$. 
In this case there is, at time $i$, a particle below $j$ and no particle above $j$. The particle below $j$ moves one step forward if and only if the horizontal edge $(i,j)\to(i+1,j)$ is closed (which occurs with probability $\eps$).

Let $J_{n,j}$ be the \emph{current} of the TASEP at time $n$ in $j$, that is the number of particles which have passed through position $j$ before time $n$:
$$
J_{n,j}:= \mathrm{card}\set{\ell> j, y_n^{\ell}=\bullet}.
$$

\begin{lem}\label{Lem:LienTASEP-PPP}
For each $n,j\geq 0$,
$$
D^\times(n,j)=n+j+2J_{n,j},
$$
where $J_{n,j}$ is the current of the TASEP with parameter $\eps$.
\end{lem}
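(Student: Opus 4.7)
The plan is to reduce the identity to the single axis $\{j=0\}$ by exploiting the $\pm 1$ vertical increments within each column, and then to establish the axis identity $D^\times(n,0)=n+2J_{n,0}$ by induction on $n$.

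First I would observe that the bullet point $|D^\times(n,k)-D^\times(n,k-1)|=1$, together with the definition of $Y_n^k$, forces
\begin{equation*}
D^\times(n,k)-D^\times(n,k-1)=1-2\,\un_{\{Y_n^k=\bullet\}}.
\end{equation*}
Summing from $k=1$ to $j\ge 0$ yields
\begin{equation*}
D^\times(n,j)=D^\times(n,0)+j-2\,\card\{1\le k\le j:Y_n^k=\bullet\}.
\end{equation*}
Since particles only move rightwards and the initial particles sit at sites $\le 0$, every particle in $[1,j]$ at time $n$ has crossed the bond $0\to 1$ but not the bond $j\to j+1$, so the cardinality above equals $J_{n,0}-J_{n,j}$. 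It thus remains to prove the axis identity $D^\times(n,0)=n+2J_{n,0}$.

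I would prove this identity by induction on $n$; the case $n=0$ is immediate. The bullet points give $D^\times(n+1,0)-D^\times(n,0)\in\{1,3\}$, and the exclusion rule gives $J_{n+1,0}-J_{n,0}\in\{0,1\}$, so it suffices to show that the distance gap equals $3$ iff the TASEP moves a particle across the bond $0\to 1$ at step $n$, i.e.\ iff the horizontal edge $(n,0)\to(n+1,0)$ is closed \emph{and} $Y_n^0=\bullet,\ Y_n^1=\circ$. Because geodesics only take N, NE, E, SE, S steps, any geodesic to $(n+1,0)$ enters column $n+1$ exactly once, at some $(n+1,k)$, and then moves vertically to the axis. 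The $\pm 1$ column increments give $D^\times(n,k)+|k|\ge D^\times(n,\mathrm{sgn}(k))+1$ for $|k|\ge 2$, which rules out $|k|\ge 2$ and leaves the three candidates $D^\times(n,0)+c$ (with $c=1$ if the edge is open, $3$ otherwise), $D^\times(n,-1)+2$ and $D^\times(n,1)+2$. When the edge is closed, the minimum equals $D^\times(n,0)+3$ iff both diagonals also contribute $D^\times(n,0)+3$, i.e.\ iff $D^\times(n,-1)=D^\times(n,0)+1$ (equivalently $Y_n^0=\bullet$) \emph{and} $D^\times(n,1)=D^\times(n,0)+1$ (equivalently $Y_n^1=\circ$): precisely the TASEP jumping condition. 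Telescoping then yields $D^\times(n,0)=n+2J_{n,0}$, and substituting into the column formula gives the lemma.

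The hard part will be this last path analysis: ruling out geodesic detours that wander far inside column $n$ before crossing into column $n+1$, and then pinning down the unique occupation pattern that simultaneously kills both diagonal shortcuts around a closed horizontal edge. The $\pm 1$ column increment structure, together with the N/NE/E/SE/S restriction on geodesics, keeps this finite and transparent.
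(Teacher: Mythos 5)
Your proof is correct and follows essentially the same two-step structure as the paper's: reduce $D^\times(n,j)$ to $D^\times(n,0)$ via the $\pm1$ vertical increments within column $n$, then establish the axis identity $D^\times(n,0)=n+2J_{n,0}$ by identifying the $+3$ horizontal increments with TASEP jumps across the bond $0\to 1$. Your explicit induction and case analysis of the incoming-edge candidates at $(n+1,0)$ simply spells out what the paper dispatches via its earlier bullet-point observations about edge increments, so the underlying argument is the same.
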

\begin{proof}
Let us first prove this assertion for $j=0$. As already noticed, distances along each horizontal edge $(i,0)\to(i+1,0)$ differ from $1$ or $3$. This implies that
$$
D^\times(n,0)=n+2\ \card\set{0\leq i\leq n-1\ |\ D^\times(i+1,0)=D^\times(i,0)+3}.
$$
But $D^\times(i+1,0)=D^\times(i,0)+3$ occurs only in the case \raisebox{-10mm}{\includegraphics[width=22mm]{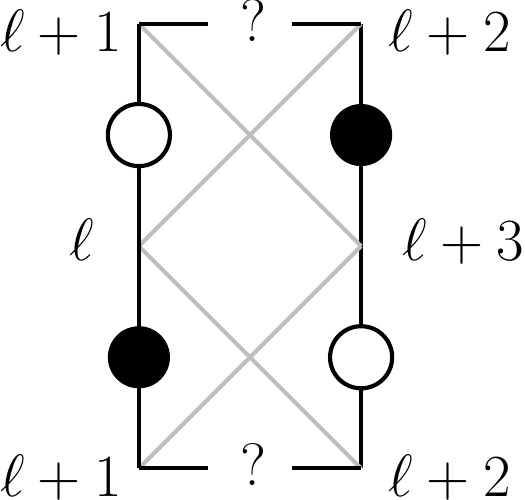}}, \emph{i.e.} when a particle jumps across the $x$-axis. Then $D^\times(n,0)\stackrel{\text{law}}=n+2J_{n,0}$.

To prove the Lemma for any $j\geq 0$, we have to compute $D^\times(n,j)-D^\times(n,0)$. But by construction of the particles
$$
D^\times(n,j)-D^\times(n,0) = j-2\ \card\set{\text{particles between $0$ and $j$ at time $n$}},
$$
and this finishes the proof.
\end{proof}

\subsection{Asymptotics in the cross model}

\begin{prop}\label{Prop:CourantTASEP}
For any $x>0$ et $-x\eps \leq y \leq x\eps$, we have almost surely and in $L^1$
$$
\frac{J_{\lf Nx\rf,\lf Ny\rf}}{N}\stackrel{N\to\infty}{\to} j(x,y):= \tfrac{1}{2}(x-y)-\tfrac{1}{2}\sqrt{(1-\eps)(x^2-y^2/\eps)}.
$$
\end{prop}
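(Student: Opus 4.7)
The right-hand side $j(x,y)$ is recognizable as the hydrodynamic current of discrete-time parallel-update TASEP under step initial condition. Indeed, the macroscopic flux at density $\rho$ with parallel updates of parameter $\eps$ is
$$f(\rho) = \tfrac12\bigl(1 - \sqrt{1 - 4\eps\rho(1-\rho)}\bigr),$$
a concave function vanishing at $0$ and $1$. The entropy solution of the conservation law $\partial_x \rho + \partial_y f(\rho) = 0$ with step datum ($\rho = 1$ on the left, $\rho = 0$ on the right) is a rarefaction fan over $|y/x| \le \eps$, inside which $\rho = \rho(y/x)$ is defined implicitly by $f'(\rho) = y/x$. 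Mass conservation yields the macroscopic current at $(x, y)$ as $x\bigl(f(\rho) - (y/x)\rho\bigr)$ (a Legendre identity applied to the density integral $\int_{y/x}^{\eps} \rho(u)\,du$); solving $f'(\rho) = y/x$ for $\rho$ and substituting back recovers the stated closed form for $j(x, y)$ after elementary algebra.

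To make this limit rigorous almost surely and in $L^1$, my plan is to reduce to Johansson's shape theorem for last-passage percolation (LPP) with geometric weights. Label the particles $m = 1, 2, \dots$ by their initial positions $0, -1, -2, \dots$, and let $\tau(m, k)$ denote the time of the $k$-th jump of the $m$-th particle. Since
$$J_{n, j} = \#\bigl\{m \ge 1 : \tau(m, m + j) \le n\bigr\},$$
any sharp asymptotic for $\tau$ transfers to one for $J$ by monotonicity in both arguments. An RSK-type coupling identifies $\tau(m, k)$ in distribution with an affine function of a last-passage time $G(m, k)$ on $\mathbb{Z}_+^2$ with i.i.d.\ geometric weights whose parameter depends on $\eps$. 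Johansson's shape theorem then yields $G(\lfloor Ma \rfloor, \lfloor Mb \rfloor)/M \to \Gamma(a, b)$ almost surely and in $L^1$, for an explicit concave $\Gamma$, with uniform convergence on compact subsets of $(0,\infty)^2$.

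Combining these ingredients, $J_{\lfloor Nx \rfloor, \lfloor Ny \rfloor}/N$ converges almost surely to the largest $\rho \ge 0$ for which the scaled LPP time at $(\rho, \rho + y)$ does not exceed $x$, and an elementary algebraic inversion matches this implicitly defined limit with the closed-form $j(x, y)$. The main obstacle is the clean setup of the LPP correspondence under parallel-update dynamics (Johansson's original bijection is for continuous-time or sequential updates) together with the final algebraic inversion. One could alternatively invoke a general hydrodynamic-limit theorem for attractive lattice systems, but those typically give convergence only in probability; for almost sure convergence, the LPP route (or a direct Kingman-style subadditivity argument applied to $\tau$) appears essential.
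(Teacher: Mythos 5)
Your proposal is essentially the paper's proof. The paper likewise reduces the TASEP current asymptotics to Johansson's shape theorem for geometric-weight LPP (it cites Sepp\"al\"ainen's lecture notes, Proposition 1.2, for the TASEP--LPP coupling, which gives the clean identity $J_{G^\star(A,B),\,A-B}=B$ --- the current-side version of your $\tau(m,k)\leftrightarrow G(m,k)$ correspondence), then controls the discrepancy between $G^\star(\lfloor Na\rfloor,\lfloor Nb\rfloor)$ and $\lfloor N\Psi(a,b)\rfloor$ using the Lipschitz bound $|J_{n,j}-J_{n',j}|\le|n-n'|$, and finally inverts $\Psi(a,b)=x$, $a-b=y$ to read off $b(x,y)=j(x,y)$. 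Your hydrodynamic-PDE derivation of the flux $f(\rho)=\tfrac12(1-\sqrt{1-4\eps\rho(1-\rho)})$ and of the rarefaction-fan current is correct but serves only as motivation; the rigorous core you outline coincides with the paper's. The concern you raise about parallel-update dynamics is a genuine subtlety, but it is exactly what the Sepp\"al\"ainen reference handles: geometric LPP corresponds to the discrete-time parallel TASEP, not merely to sequential or continuous-time versions, so no further work is needed there.
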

Note that if on the contrary $y>x\eps$, it is clear that $J_{\lf Nx\rf,\lf Ny\rf}/N\to 0$, because the right-most particle in the TASEP is at time $N$ at position $N\eps+\mathrm{o}(N)$.
\begin{proof}[Proof of Proposition \ref{Prop:CourantTASEP}]
With extra work this can be seen as a consequence of the work by Jockusch, Propp and Shor on the discrete-time TASEP (\cite{Domino}, Theorem 2). Here we deduce it from the results by Johansson \cite{Johansson} on last passage percolation (LPP) with geometric passage times with parameter $\eps$ (we refer to \cite{Sep} for the connection between discrete-time TASEP and last passage percolation).

For the reader's convenience, we detail the computations.
To define the model of LPP with geometric weights, let $(g_{i,j})_{i,j\geq 1}$ be i.i.d. geometric variables with parameter $\eps$. For a point $(i,j)$ in the quadrant $\set{i,j\geq 1}$, we write $G(x,y)$ for the \emph{last passage time} at $(x,y)$, \emph{i.e.}
$$
G^\star(x,y):= \max_{\gamma : 0\to(x,y)} \sum_{(i,j)\in \gamma} g_{i,j}
$$
where the $\max$ is taken over the $\binom{x+y-2}{x-1}$ paths with North/East steps going from $(1,1)$ to $(x,y)$.
Johansson (\cite{Johansson}, Theorem 1.1, see also \cite{Sep}, Theorem 2.2) has shown that for all $a,b>0$
$$
\frac{G^\star(\lf Na\rf,\lf Nb\rf )}{N} \to \Psi(a,b):= \frac{a+b+2\sqrt{(1-\eps)ab}}{\eps},
$$
where the convergence holds a.s. and in $L^1$
(note that the $p$ in Johansson's article, corresponds to $\eps=1-p$ with our notations).
Thanks to a plain correspondence between TASEP and LPP (see \cite{Sep} Proposition 1.2) there is coupling between LPP and discrete-time TASEP with parameter $\eps$ such that for any integers $A\geq B\geq 1$
\begin{equation}\label{Eq:FlotPPP}
J_{G^\star(A,B),A-B}=B.
\end{equation}
For $a\geq b >0$, let us write
$$
\frac{J_{\lf N\Psi(a,b)\rf,\lf N(a-b)\rf}}{N}
=
\frac{J_{G^\star(\lf Na\rf,\lf Nb\rf ), \lf N(a-b)\rf}}{N}
+\left(\frac{J_{\lf N\Psi(a,b)\rf,\lf N(a-b)\rf}}{N}
-\frac{J_{G^\star(\lf Na\rf,\lf Nb\rf ), \lf N(a-b)\rf}}{N}\right).
$$
Using \eqref{Eq:FlotPPP}, the first term in the right-hand side goes to $b$. The second term goes to zero almost surely since for any $n,n',j$ we have
$|J_{n,j}-J_{n',j}|\leq |n-n'|$.

We search $a=a(x,y)$ and $b=b(x,y)$ such that $\Psi(a,b)=x$ and $a-b=y$. This is possible if $y\leq x\eps$ and in this case we obtain
$$
\frac{J_{\lf Nx\rf,\lf Ny\rf}}{N}\stackrel{N\to\infty}{\to} b(x,y)
=\tfrac{1}{2}(x-y)-\tfrac{1}{2}\sqrt{(1-\eps)(x^2-y^2/\eps)}.
$$
\end{proof}

Taking $x=1,y=\eps\lambda$ in the Proposition, we obtain with Lemma \ref{Lem:LienTASEP-PPP} the following asymptotics for the distances in the cross model. Note that from now on, we skip the integer parts $[.]$ in order to lighten notations.
\begin{coro}\label{Coro:AsymptotiqueCrossModel}
In the cross model, for any $0<\eps,\lambda<1$,
$$
\frac{D^\times(n,n\eps\lambda)}{n} \stackrel{n\to\infty}{\to}
f(\lambda,\varepsilon):=2-\sqrt{1-\varepsilon(1+\lambda^2)+\lambda^2\varepsilon^2}
=1+\frac{\eps}{2}(1+\lambda^2) +\mathcal{O}(\eps^2),
$$
where the convergence is almost sure and in $L^1$.
\end{coro}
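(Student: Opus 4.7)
The plan is short: this statement is a direct corollary obtained by chaining Lemma \ref{Lem:LienTASEP-PPP} with Proposition \ref{Prop:CourantTASEP} at the specific point $(x,y) = (1,\eps\lambda)$, followed by a one-line Taylor expansion in $\eps$.

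First I would invoke Lemma \ref{Lem:LienTASEP-PPP}, which gives the exact pointwise identity
$$
D^\times(n, n\eps\lambda) = n + n\eps\lambda + 2\, J_{n,n\eps\lambda},
$$
so that, after dividing by $n$, the asymptotics of $D^\times(n,n\eps\lambda)/n$ are entirely controlled by those of $J_{n,n\eps\lambda}/n$. Since $0\le \lambda \le 1$, the hypothesis $-x\eps \le y \le x\eps$ of Proposition \ref{Prop:CourantTASEP} is satisfied at $(x,y)=(1,\eps\lambda)$, and the proposition yields, almost surely and in $L^1$,
$$
\frac{J_{n,n\eps\lambda}}{n} \longrightarrow j(1,\eps\lambda) = \tfrac{1}{2}(1-\eps\lambda) - \tfrac{1}{2}\sqrt{(1-\eps)(1 - \eps\lambda^2)}.
$$
Plugging this back into the identity from Lemma \ref{Lem:LienTASEP-PPP} and simplifying gives
$$
\frac{D^\times(n,n\eps\lambda)}{n} \longrightarrow 1 + \eps\lambda + (1-\eps\lambda) - \sqrt{(1-\eps)(1-\eps\lambda^2)} = 2 - \sqrt{1 - \eps(1+\lambda^2) + \eps^2\lambda^2},
$$
which is exactly $f(\lambda,\eps)$. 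The second expression $1 + \tfrac{\eps}{2}(1+\lambda^2) + \mathcal{O}(\eps^2)$ then follows by applying $\sqrt{1-u} = 1 - u/2 + \mathcal{O}(u^2)$ with $u = \eps(1+\lambda^2) - \eps^2\lambda^2$.

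There is no real obstacle here, since every ingredient has already been established. The only point worth a line of justification is that the statement silently drops the integer parts $\lfloor n\eps\lambda\rfloor$: this is harmless because the elementary Lipschitz bound $|J_{n,j} - J_{n,j'}| \le |j-j'|$ (used in the proof of Proposition \ref{Prop:CourantTASEP}) gives $|J_{n,n\eps\lambda} - J_{n,\lfloor n\eps\lambda\rfloor}| \le 1$, and a similar $\mathcal{O}(1)$ bound for $D^\times$, so both the almost-sure and the $L^1$ convergence carry over to the version with floors without modification.
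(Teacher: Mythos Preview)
Your proof is correct and follows exactly the route indicated in the paper: specialize Proposition~\ref{Prop:CourantTASEP} at $(x,y)=(1,\eps\lambda)$, combine with the identity of Lemma~\ref{Lem:LienTASEP-PPP}, and expand the square root. Your extra remark on why the integer parts can be dropped is a welcome clarification that the paper only mentions in passing.
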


\section{The lower bound}

With the asymptotics found in the Cross Model, we are now able to obtain the lower bound for the distances in standard percolation in $\Z^2$.
Adding diagonal edges to $\Z^2$ decreases distances, so by an obvious coupling between percolation in $\Z^2$ and in the cross model we have
$$
\frac{D(\Zero,(n,n\eps\lambda))}{n}\geq \frac{D^\times(n,n\eps\lambda)}{n}.
$$
Letting $n$ go to infinity, we get
$$
\liminf_{n\to\infty} \frac{D(\Zero,(n,n\eps\lambda))}{n}\geq f(\lambda,\varepsilon)= 1+\frac{\eps}{2}(1+\lambda^2)+\mathcal{O}(\eps^2).
$$

\section{The upper bound}

The proof of the upper bound is more delicate. We first construct in a canonical way a geodesic in the Cross Model, and then show how to modify it to obtain an almost optimal path between $\Zero$ and $(n,n\eps\lambda)$ in the original model.

\subsection{The construction of a canonical geodesic}



Starting from the end $E:=(n,n\eps\lambda)$, we construct backwards a geodesic $\pi^\times$, in the Cross Model, joining $\Zero$ to $E$.
An important feature of this construction is that it only depends on the trajectory of the particles.

The reader is invited to follow the construction on the following example (here $E=(7,1)$ and $\pi^\times$ is drawn in red, $(\sigma_1,\sigma_2,\dots)$
stands for the sequence of particles ranked according to their height):
\begin{center}
\includegraphics[width=80mm]{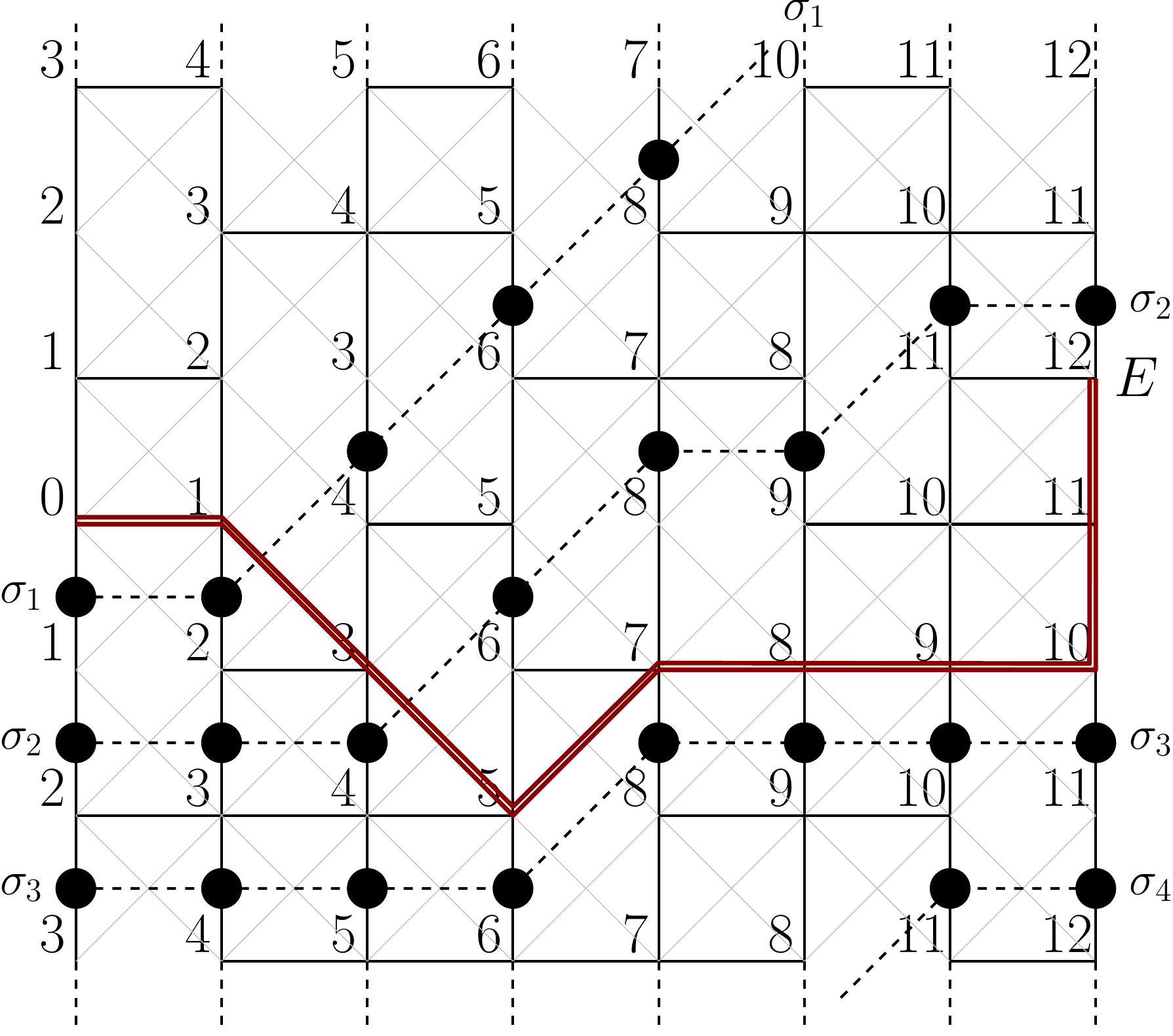}
\end{center}
The path starts (backwards) from $E$ by taking some vertical edges in the following way:
\begin{itemize}
\item if there is no particle on the vertical edge just below $E$ (as in the example), we go down until finding the first vertex that is just below an empty edge and just above an edge with a particle (in the example, until being at $(7,-1)$ just above particle $\sigma_3$);
\item if, on the contrary, there is a particle on the edge just below $E$, we go up until finding the first vertex that is below an empty edge and above an edge with a particle.
\end{itemize}
Note that if both conditions are realized, \emph{i.e.} if $E$ is just below an empty edge and just above an edge with a particle, then the path does not take any vertical edge.

We now proceed from right to left by taking $n$ horizontal or diagonal edges going to zero, so that each site of the path is just below an empty edge and just above a particle.
Let us write it more formally. After the first vertical edges, we are at a site with a certain particle $\sigma_p$ just below; let us denote by $(i,j)$ this site, and $\ell$ its distance to the origin.

Then, three cases may occur:
\begin{enumerate}
\item[{\bf Case A.}]
\begin{tabular}{m{95mm} m{25mm}}
Particle $\sigma_p$ had jumped at time $i-1$. Then the path follows the diagonal edge $(i,j)\to (i-1,j-1)$. Note that there is still an empty edge just above, if not $\sigma_p$ would not have moved.
&
\includegraphics[width=25mm]{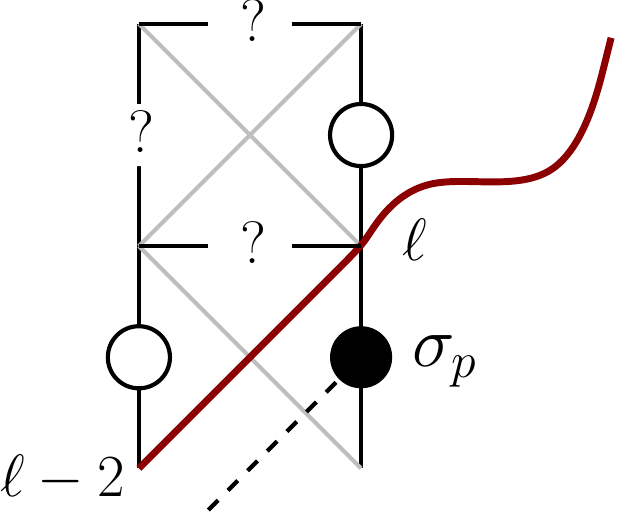}
\end{tabular}
\item[{\bf Case B.}]
\begin{tabular}{m{95mm} m{25mm}}
Particle $\sigma_{p-1}$ was just above $\sigma_p$ at time $i-1$. Then necessarily it moved (since edge $(i,j)\to(i,j+1)$ is now empty). The path follows  the diagonal edge $(i,j)\to (i-1,j+1)$.
&
\includegraphics[width=25mm]{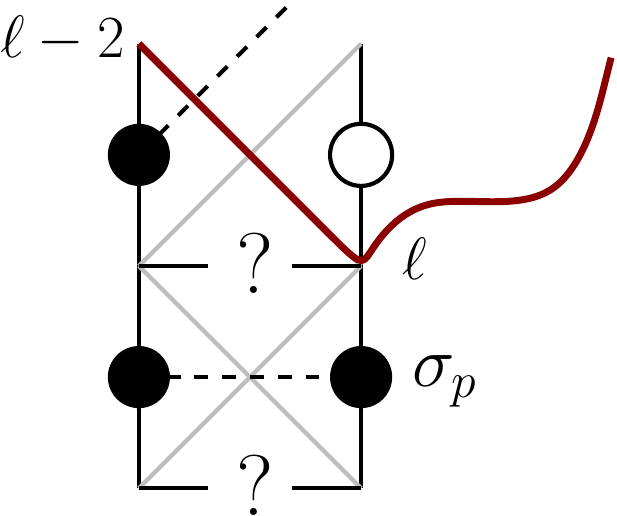}
\end{tabular}
\item[{\bf Case C.}]
\begin{tabular}{m{95mm} m{25mm}}
At time $i-1$, there is no particle above $\sigma_p$. This implies that $(i,j)\to(i-1,j)$ is open (if not, $\sigma_p$ would have moved).
The path follows this edge, and doing so it stays just above $\sigma_p$.
&
\includegraphics[width=25mm]{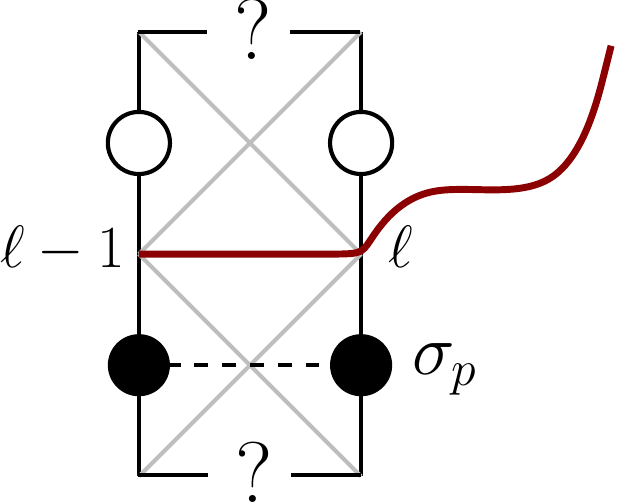}
\end{tabular}
\end{enumerate}

Let us record two features of this path $\pi^\times$:
\begin{itemize}
\item it takes only E,NE,SE edges until reaching $\set{x=n}$ (it takes exactly $n$ such steps), and then possibly taking some additional vertical edges in the form $(n,j)\to(n,j\pm 1)$ to reach $E$;
\item it takes a diagonal edge $(i,j)\to(i+1,j\pm1)$ only if the horizontal edge $(i,j)\to(i+1,j)$ is closed.
\end{itemize}

\begin{lem}\label{Lem:CheminSurLaVague}
The path $\pi^\times$ is a geodesic between $\Zero$ and $(n,n\eps\lambda)$ for the Cross Model. Moreover, $\pi^\times$ depends only on the trajectories of particles.
\end{lem}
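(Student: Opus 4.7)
My plan is to prove the two assertions separately. For the second (trajectory-only dependence), I would simply inspect the construction rules. The initial vertical segment's length and direction are read off from the column configuration $\mathbf{Y}_n$ alone, since we move in the direction where $D^\times$ strictly decreases (equivalently, away from a particle) and stop at the first $j$ with $Y_n^{j}=\bullet$ and $Y_n^{j+1}=\circ$. Moving leftward, the trichotomy A/B/C at column $i$ only asks whether the particle $\sigma_p$ just below the current vertex jumped between times $i-1$ and $i$ and, if not, whether $\sigma_{p-1}$ sat directly above $\sigma_p$ at time $i-1$: both queries depend only on $\mathbf{Y}_{i-1}$ and $\mathbf{Y}_i$. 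The one subtlety is that the horizontal edge used in Case C belongs to the cross-model configuration; but by the TASEP dynamics a horizontal edge is closed iff it forces the particle just below to jump, so the fact that $\sigma_p$ did not move despite an empty site above means that edge had to be open.

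For the first assertion, my plan is to show that every edge of $\pi^\times$ is \emph{tight}, in the sense that traversing it away from $\Zero$ increases $D^\times(\Zero,\cdot)$ by exactly the length of the edge; telescoping then yields that the total length of $\pi^\times$ equals $D^\times(\Zero,E)$. Vertical edges in the initial segment are tight by the listed property $|D^\times(i,j)-D^\times(i,j\pm 1)|=1$ combined with our choice of direction. Horizontal edges in Case C are tight because they are open and, on any open horizontal edge, $D^\times(i+1,j)=D^\times(i,j)+1$. Along the way one checks that the construction preserves the invariant $Y_i^{h_i}=\bullet$, $Y_i^{h_i+1}=\circ$ at the current vertex $(i,h_i)$, which in particular forces the main part to end at $(0,0)$ since the initial TASEP configuration has its unique $\bullet\circ$ interface between sites $0$ and $1$.

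The real content lies in the diagonal edges used in Cases A and B: the listed properties only give $D^\times(i+1,h_i\pm 1)-D^\times(i,h_i)\in\{0,2\}$, and one has to rule out the slack value $0$. My plan is to invoke Lemma \ref{Lem:LienTASEP-PPP} (extended from $j\geq 0$ to all $j$ by the vertical symmetry of the model), which reduces tightness of the diagonal to the current identities $J_{i+1,h_i+1}=J_{i,h_i}$ in Case A and $J_{i+1,h_i-1}=J_{i,h_i}+1$ in Case B. Each follows from a one-step bookkeeping of the particles strictly above the moving threshold: in Case A, $\sigma_p$ crosses from $h_i$ to $h_i+1$ in step with the threshold while every higher particle sits at height $\geq h_i+2$ (because the site $h_i+1$ was empty) and stays above $h_i+1$; in Case B, $\sigma_{p-1}$ leaps from $h_i$ to $h_i+2$ contributing exactly one to the new count above $h_i-1$, while the other particles remain on the correct side. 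The main obstacle is precisely this case-by-case bookkeeping under the parallel update of discrete-time TASEP, to confirm that no particle sneaks across the threshold in a single step.
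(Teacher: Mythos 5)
Your overall plan matches the paper's: the second assertion by inspection of the construction, the first by showing every edge of $\pi^\times$ is tight (its length equals the increment of $D^\times$) and telescoping, with the observation that the ``just above a particle, just below an empty edge'' invariant forces the main part to terminate at the origin. Vertical and Case~C edges are handled exactly as in the paper.

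The one point where you diverge, and where your sketch has a real gap, is the diagonal steps. You propose to reduce tightness to the current identities $J_{i+1,h_i+1}=J_{i,h_i}$ (Case~A) and $J_{i+1,h_i-1}=J_{i,h_i}+1$ (Case~B) via Lemma~\ref{Lem:LienTASEP-PPP}. But that lemma is stated for $j\ge 0$, and as you note $\pi^\times$ can dip below the $x$-axis (it does in the paper's running example, where the initial vertical run ends at $(7,-1)$). For $j<0$ the quantity $J_{n,j}=\card\{\ell>j,\,y_n^\ell=\bullet\}$ is infinite, so the lemma cannot be ``extended by vertical symmetry'' in the form you invoke; one would have to reformulate it (e.g.\ via $D^\times(n,j)-D^\times(n,0)= j - 2\,\card\{\text{particles between }0\text{ and }j\}$) and redo the telescoping, which is more work than you acknowledge. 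The paper avoids all of this: for Case~A it argues directly that the particle on the edge below $x_i$ gives $D^\times(x_i-(0,1))=D^\times(x_i)+1$, and the fact that $\sigma_p$ jumped means the horizontal edge under it was closed with a $+3$ jump, so $D^\times(x_{i-1})=D^\times(x_i)+1-3=D^\times(x_i)-2$ — a two-line local argument using only the itemised distance relations of the cross model, no currents needed. I'd recommend replacing your current-based step by this direct computation; the rest of your proposal is sound and aligned with the paper.
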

\begin{proof}
The second assertion is clear by construction.
Besides, the path always goes through vertices which are just above a particle and below an empty edge. Thus, when the first coordinate is zero, it is necessarily at the origin, since this is the only site on the first column which satisfies this property.

Writing
$$
\pi^\times =\left(x_0=0,x_1,\dots,x_L=E\right),
$$
we have to prove that for each $i$ the length of the edge $(x_{i-1}, x_i)$ is equal to $D^\times(x_i)-D^\times(x_{i-1})$.

\begin{itemize}
\item By construction, if we had to take at the first stage $r$ vertical edges, these edges led to a site which is at distance $D^\times(n,n\eps\lambda)-r$ from the origin.
\item When the path takes an horizontal edge $(x_{i-1}, x_i)$ (case C above), this edge is open and then $D^\times(x_i)=D^\times(x_{i-1})+1$.
\item It remains the case of a diagonal edge $(x_{i-1}, x_i)$ (cases A,B above), we do the case $A$. Set $\ell=D^\times(x_i)$, since there is a particle on the edge $(x_i,x_i-(0,1))$, then $D^\times(x_i-(0,1))=\ell+1$. Since $\sigma_p$ has jumped then $D^\times(x_{i-1})=\ell+1-3$.
\end{itemize}
\end{proof}

\subsection{How to bypass bad edges}\label{Sec:Contour}
The aim of this section is to construct from the path $\pi^\times$ obtained by Lemma \ref{Lem:CheminSurLaVague} an open path of $\mathbb{Z}^2$ which is barely longer than $\pi^\times$.

Recall that $\pi^\times$ can take either horizontal, vertical or diagonal edges.
Since we want to construct an open path on $\mathbb{Z}^2$, we need to replace its diagonal edges and its final closed vertical edges by detours of open edges. 

We begin by doing a transformation which enables us to replace the diagonal edges of $\pi^\times$ without changing the length of the path.
If $\pi^\times$ takes a diagonal edge
 $(i,j)\rightarrow(i+1,j\pm 1)$ then we replace this edge by  the path $(i,j)\rightarrow(i,j\pm 1)\rightarrow(i+1,j\pm 1)$ :
\begin{center}
\includegraphics[width=75mm]{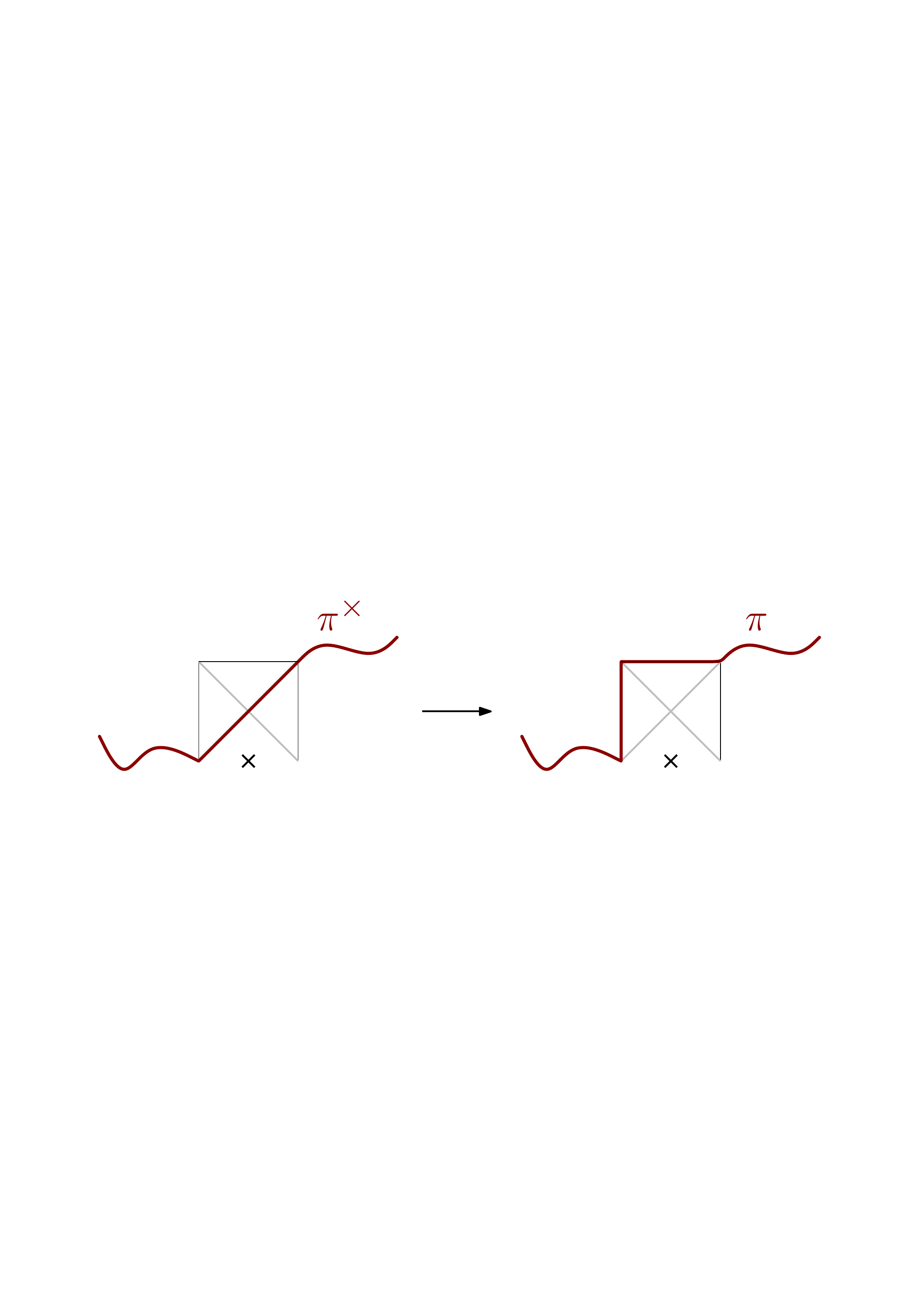}
\end{center}
We denote the new path by $\pi$.
We denote by $\mathcal{K}$ the set of edges which are either a vertical edge of $\pi^\times$ or an edge that appears in $\pi$ but not in $\pi^\times$.
Notice that $\pi^\times, \pi$ and $\mathcal{K}$ depends only on the TASEP.
The new path is a path on $\mathbb{Z}^2$ and it just remains to bypass its closed edges.
We call those closed edges the bad edges of $\pi$ and denote the set of all bad edges by $\mathcal{B}$.
By construction, $\mathcal{B}$ is a subset of $\mathcal{K}$.
We shall also write $K=|\mathcal{K}|$ and $B=|\mathcal{B}|$.

\begin{lem} \label{l:stic}
For $\eps$ small enough, for all $n$ large enough, we have $\mathbb{E}(K) \le 2n\eps$.
\end{lem}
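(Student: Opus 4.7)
The plan is to identify $K$ combinatorially in terms of $\pi^\times$ and then reduce $\mathbb{E}[K]$ to the expectation of $D^\times(n,n\eps\lambda)$, which is controlled by Corollary~\ref{Coro:AsymptotiqueCrossModel}. Write $V$, $c$, $D$ for the numbers of vertical, (open) horizontal and diagonal edges of $\pi^\times$. I would first check that $K=V+2D$. Each diagonal $(i,j)\to(i+1,j\pm1)$ of $\pi^\times$ is replaced in $\pi$ by the two edges $(i,j)\to(i,j\pm1)$ and $(i,j\pm1)\to(i+1,j\pm1)$; these $2D$ new edges are pairwise distinct and none of them already lies in $\pi^\times$. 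Indeed the $V$ vertical edges of $\pi^\times$ sit in column $n$ (vertical edges are introduced only at the very first stage of the backwards construction), whereas the new vertical edges live in columns $i<n$; the new horizontal edge $(i,j\pm1)\to(i+1,j\pm1)$ cannot belong to $\pi^\times$ since $\pi^\times$ visits only one vertex in column $i<n$, namely $(i,j)$; and distinct diagonals contribute edges in distinct columns.

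Next I would exploit the fact that $\pi^\times$ is a geodesic. Since it advances its first coordinate exactly once per column $1,\dots,n$, one has $c+D=n$, so the length $V+c+2D$ of $\pi^\times$ equals $V+n+D$, and
\[
V+D=D^\times(n,n\eps\lambda)-n.
\]
The trivial bound $D\le V+D$ then gives the key inequality
\[
K=(V+D)+D\le 2\bigl(D^\times(n,n\eps\lambda)-n\bigr).
\]
Taking expectations and invoking the $L^1$ part of Corollary~\ref{Coro:AsymptotiqueCrossModel},
\[
\tfrac{1}{n}\,\mathbb{E}\bigl[D^\times(n,n\eps\lambda)\bigr]\ \xrightarrow[n\to\infty]{}\ f(\lambda,\eps)=2-\sqrt{(1-\eps)(1-\lambda^2\eps)}.
\]
An elementary manipulation using $\lambda^2\le1$ shows $(1-\eps)(1-\lambda^2\eps)\ge(1-\eps)^2$, hence $f(\lambda,\eps)-1\le\eps$ for all $\lambda\in[0,1]$ and $\eps\in(0,1)$. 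Choosing $\eps$ small and then $n$ large, one concludes $\mathbb{E}[K]\le 2n\eps$.

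The one delicate point is the boundary case $\lambda=1$, at which $f(1,\eps)-1=\eps$ \emph{exactly}: there the bound $\le 2n\eps$ is saturated in the limit, and it is essential to rely on $L^1$ (rather than merely almost sure) convergence, together with the quantifiers ``$\eps$ small enough, $n$ large enough'' of the statement, in order to absorb the $o(n)$ error. All the other steps are routine bookkeeping about $\pi^\times$.
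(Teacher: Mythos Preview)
Your argument is essentially the paper's own proof, carried out with more detail. The paper also records that the number of diagonals plus the number of final vertical edges equals $D^\times(n,n\eps\lambda)-n$, deduces $K\le 2(D^\times(n,n\eps\lambda)-n)$, and then invokes the $L^1$ convergence of Corollary~\ref{Coro:AsymptotiqueCrossModel}. You additionally justify the exact identity $K=V+2D$ and the non-overlap of the replacement edges, which the paper leaves implicit.

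One remark on the boundary case you flag: you are right that the paper's assertion that the limit is ``strictly less than $\eps$'' fails at $\lambda=1$, where $f(1,\eps)-1=\eps$ on the nose. However, your proposed fix via ``$L^1$ convergence together with the quantifiers'' does not actually close this gap either: from $\mathbb{E}(K)/n\le 2\,\mathbb{E}(D^\times-n)/n\to 2\eps$ one only obtains $\limsup_n \mathbb{E}(K)/n\le 2\eps$, not $\mathbb{E}(K)\le 2n\eps$ for all large $n$. This is harmless for the rest of the paper, since the lemma is only ever used to get $\mathbb{E}(K)=O(n\eps)$ and any fixed constant in place of $2$ would do (and $\lambda=1$ sits on the oriented-percolation cone where the time constant is already known), but strictly speaking neither proof establishes the stated inequality at $\lambda=1$.
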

\begin{proof}
The sum of the number of diagonal edges in $\pi^\times$ and of the number of final vertical edges, by definition of the cross model, is equal to
$
D^\times(n,n\eps\lambda)-n.
$
Corollary \ref{Coro:AsymptotiqueCrossModel} implies that $\mathbb{E}\left((D^\times(n,n\eps\lambda)-n)/n\right)$ converges to a limit which is strictly less that $\eps$ for small enough $\eps$.
The lemma follows from the fact that $K$ is at most $2(D^\times(n,n\eps\lambda)-n)$.
\end{proof}

Consider the dual graph $(\mathbb{Z}^2)^\star$ of $\mathbb{Z}^2$ and associate to each edge $e\in\Z^2$  the
unique edge $e^\star$ of the dual which crosses $e$. We say that $e^\star$ is open (resp. closed) if $e$ is open (resp. closed).
For each (closed) bad edge $e$,
consider the set $\mathcal{C}^\star(e)$ defined by
$$
\mathcal{C}^\star(e)=\{\hbox{closed edges of $(\mathbb{Z}^2)^\star$ connected to $e^\star$ by a path of closed edges}\}.
$$
Define its boundary
$$\Delta\mathcal{C}^\star(e)=\{\hbox{open edges of $(\mathbb{Z}^2)^\star$ which share at least one vertex with an edge of $\mathcal{C}^\star(e)$}\}.$$

\vspace{2mm}
\begin{center}
\begin{tabular}{m{50mm} m{70mm}}
What happens around a bad edge $e$: &
\includegraphics[width=80mm]{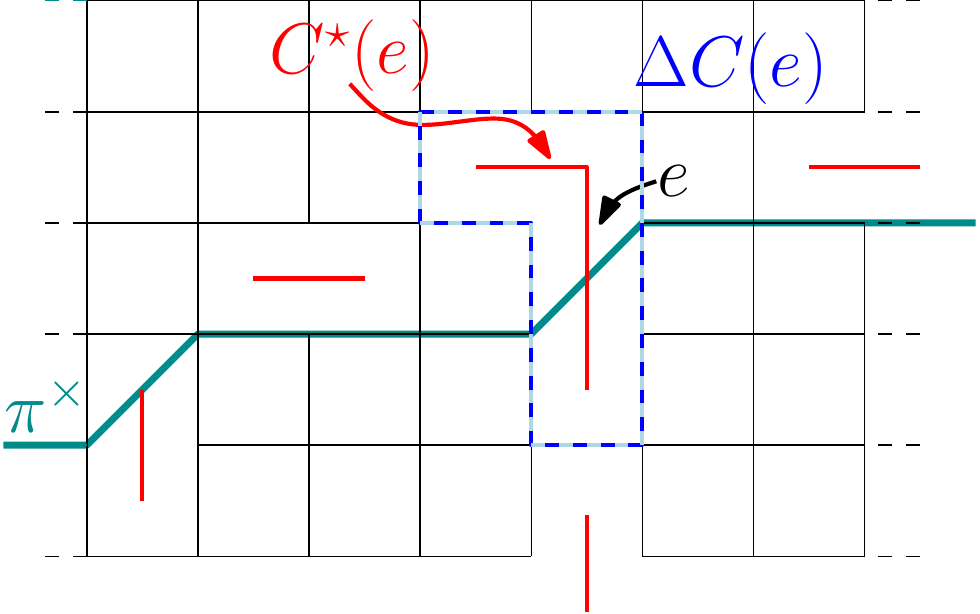}
\end{tabular}
\end{center}
\vspace{2mm}

We denote by $\Delta\mathcal{C}(e)$ the set of (open) edges associated to $\Delta\mathcal{C}^\star(e)$ in the initial graph, and set
$$
\Delta\mathcal{C}(\pi) = \bigcup_{e \in \mathcal{B}} \Delta\mathcal{C}(e).\\
$$

The following topological proposition will be crucial to build from $\pi^\times$ a short open path in the percolation cluster.

\begin{prop}\label{Claim}
On the event $\mathbf{0}\leftrightarrow (n,n\lambda\varepsilon)\leftrightarrow \infty$ there exists an open path in $\mathbb{Z}^2$ from $\mathbf{0}$ to $(n,n\lambda\varepsilon)$ which only uses edges of $\pi$ or of $\Delta\mathcal{C}(\pi)$.
\end{prop}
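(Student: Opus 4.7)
The plan is to establish the local claim: for every bad edge $e\in\mathcal{B}$ with endpoints $u,v$, there exists an open path from $u$ to $v$ whose edges all lie in $\Delta\mathcal{C}(e)$. Given this, the desired global open path is obtained by traversing $\pi$ and substituting each bad edge $e$ by its local detour in $\Delta\mathcal{C}(e)\subseteq\Delta\mathcal{C}(\pi)$; the resulting walk from $\mathbf{0}$ to $(n,n\lambda\eps)$ uses only edges of $\pi\cup\Delta\mathcal{C}(\pi)$ and is open, as required.

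The local claim rests on two-dimensional planar duality in the subcritical regime. Since $p=1-\eps>1/2$, closed edges percolate subcritically, so a.s.\ every closed-edge cluster in the dual is finite; in particular $\mathcal{C}^\star(e)$ is a finite, connected subgraph of $(\mathbb{Z}^2)^\star$. The standard outer-boundary fact in 2D percolation (see e.g.\ Grimmett's \emph{Percolation}, Section 11.2) then produces a primal circuit of open edges surrounding $\mathcal{C}^\star(e)$; by the very definition of $\Delta\mathcal{C}(e)$, every edge of this circuit belongs to $\Delta\mathcal{C}(e)$. Since $e^\star\in\mathcal{C}^\star(e)$ and $u,v$ are the two primal vertices incident to $e^\star$, both must lie on the circuit, which yields the $u$--$v$ path.

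The main delicate point is the topological step above when $\mathcal{C}^\star(e)$ has a complicated shape (bridges, several bounded complementary faces, thin ``isthmuses'', etc.). The cleanest way to close the gap is to consider the face $F$ of the planar embedding of $\mathcal{C}^\star(e)$ that has $e^\star$ on its boundary: tracing the primal edges bordering $F$ produces a closed walk, each of whose edges is open and belongs to $\Delta\mathcal{C}(e)$, and this walk passes through both $u$ and $v$ because both sides of $e^\star$ meet $F$. Extracting a simple $u$--$v$ subpath proves the local claim, after which the concatenation sketched in the first paragraph immediately yields the proposition.
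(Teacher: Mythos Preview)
Your local claim --- that for each bad edge $e=(u,v)$ there is an open $u$--$v$ path inside $\Delta\mathcal{C}(e)$ --- is false in general, and this is where the argument breaks. Nothing guarantees that the intermediate vertices of $\pi$ lie in the infinite cluster; the hypothesis $\mathbf{0}\leftrightarrow(n,n\lambda\eps)\leftrightarrow\infty$ constrains only the two endpoints. Concretely, two consecutive edges of $\pi$ can both lie in $\mathcal{K}$ (for instance the vertical/horizontal pair replacing a single diagonal of $\pi^\times$), and since the edges of $\mathcal{K}$ are unbiased, with positive probability all four primal edges incident to the common vertex $w$ are closed. Then $w$ is an isolated vertex in the open graph, the dual $4$-cycle around $w$ sits inside $\mathcal{C}^\star(e)$, and $w$ is \emph{interior} to $\mathcal{C}^\star(e)$: no edge of $\Delta\mathcal{C}(e)$ touches $w$, so no open detour from $w$ to the other endpoint exists. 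Your outer-circuit argument tacitly assumes that both $u$ and $v$ lie on the unbounded side of $\mathcal{C}^\star(e)$, which is exactly what fails here; likewise, in your ``face $F$'' variant the two endpoints of $e$ typically lie in \emph{different} faces of the planar embedding of $\mathcal{C}^\star(e)$ (they are separated by $e^\star$ unless $e^\star$ is a bridge), so a single face boundary walk does not visit both.

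The paper avoids this by not detouring edge by edge. For a bad edge $e_1$ it takes $\alpha$, the vertex of $\pi$ just before the \emph{first} crossing of $\mathcal{C}^\star(e_1)$, and $\beta'$, the vertex just after the \emph{last} crossing, and connects them through the exterior boundary $(\Delta\mathcal{C}(e_1))_\infty$. The point is that $\alpha$ and $\beta'$ are automatically on the outside: the portion of $\pi$ before $\alpha$ (resp.\ after $\beta'$) does not cross $\mathcal{C}^\star(e_1)$ and joins $\mathbf{0}$ (resp.\ $(n,n\lambda\eps)$), which lies in the unbounded region since it is in the infinite open cluster. Any trapped vertices of $\pi$ between $\alpha$ and $\beta'$ are simply skipped. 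To repair your argument you must replace the per-edge detour by this first/last-crossing detour.
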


\begin{proof} The proof is purely topological and is postponed to Appendix.
\end{proof}

In view of this proposition, it remains to bound the cardinality of $\Delta\mathcal{C}(\pi)$ to get an upper bound of the length of the geodesic between $\Zero$ and $(n,n\lambda\varepsilon)$.
Let us describe the probability distribution of the set of open edges conditional on the position of particles.
An edge will be called \emph{unbiased} if, conditional on the position of the particles which are inherited from the cross-model, it is open with probability $1-\eps$ independently from all the states of the other edges.
An edge which is not unbiased is called \emph{biased}.

In the analysis of the state of an edge, four cases may arise.
\begin{itemize}
\item[{\bf Case 1.}] Vertical edges are all unbiased by definition of the cross model.
\item[{\bf Case 2.}] The two configurations of particles leading to an unbiased horizontal edge are the following:
\begin{center}
\includegraphics[height=25mm]{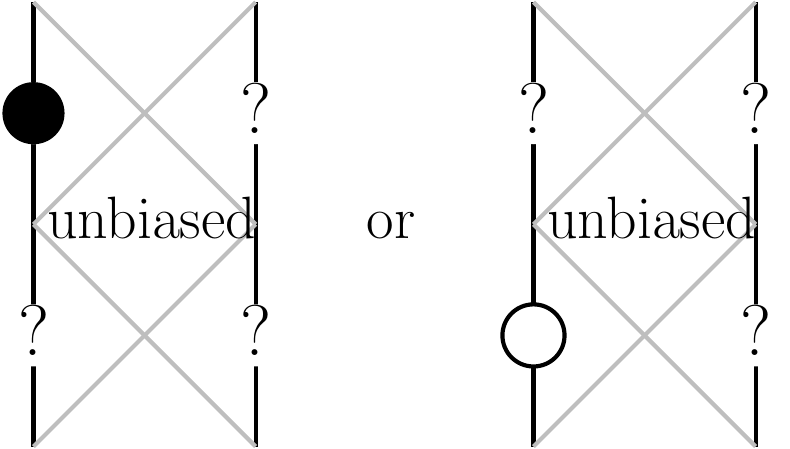}
\end{center}
Indeed, the state of the horizontal edge has no influence on the motion of particles.
\item[{\bf Case 3.}] The following configuration leads to a closed edge:
\raisebox{-10mm}{\includegraphics[height=25mm]{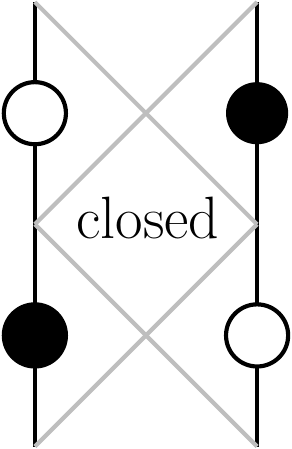}}
\item[{\bf Case 4.}] The following configuration leads to an open edge:
\raisebox{-10mm}{\includegraphics[height=25mm]{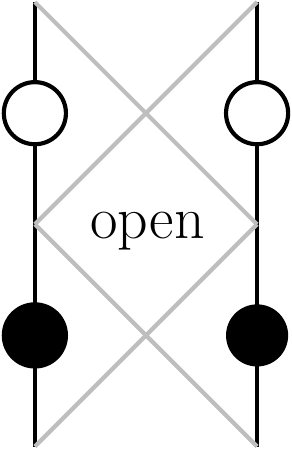}}
\end{itemize}

Denote by $\mathcal{T}$ the $\sigma$-field generated by the particles ($\mathcal{T}$ stands for TASEP).
Thus, conditional on $\mathcal{T}$, some edges are open, some edges are closed and the other edges are unbiased.
Now we use the previous facts to prove the following two lemmas:

\begin{lem}\label{Lem:MajoMauvaisCarres}
The edges of $\mathcal{K}$ are unbiased.
\end{lem}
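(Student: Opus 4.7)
The plan is to partition $\mathcal{K}$ into vertical edges and horizontal edges and to check, for each type, that it falls into Case 1 or Case 2 of the four-configuration classification given just above. Since by Lemma \ref{Lem:CheminSurLaVague} the path $\pi^\times$ is $\mathcal{T}$-measurable and the deterministic rule that produces $\pi$ from $\pi^\times$ depends only on $\pi^\times$, the set $\mathcal{K}$ is itself $\mathcal{T}$-measurable; this is what makes an edge-by-edge analysis conditional on $\mathcal{T}$ meaningful.

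The vertical edges in $\mathcal{K}$ — whether originally in $\pi^\times$ (including the initial vertical segment descending or ascending from $E$) or inserted when a diagonal edge is replaced by its vertical-then-horizontal detour — are unbiased by Case 1. So the real content is to understand the horizontal edges newly inserted in $\pi$ during the replacement step. I would handle these by inspecting the two local pictures (backward Cases A and B) that can give rise to a diagonal edge of $\pi^\times$. In Case A the particle $\sigma_p$ just below the endpoint of the diagonal edge has just jumped, so at the earlier time the site of the TASEP that sits directly at the left endpoint of the inserted horizontal edge is empty — which matches the first configuration of Case 2. In Case B the particle $\sigma_{p-1}$ has just moved out from above $\sigma_p$, which forces at the earlier time the site just above the left endpoint of the inserted horizontal edge to be occupied, matching the second configuration of Case 2. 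In both cases the state of the inserted horizontal edge is irrelevant to the TASEP transition at that step, so the edge is unbiased.

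The main difficulty is bookkeeping: carefully translating between the backward description of $\pi^\times$ used in its construction and the forward TASEP pictures that define the four configurations, and confirming that every new horizontal edge in $\pi \setminus \pi^\times$ falls under Case A or Case B (Case C contributes a horizontal edge that already lies in $\pi^\times$ and is therefore not in $\mathcal{K}$, and the initial vertical excursion contributes only vertical edges). Once the indices are lined up, the claim reduces to a direct inspection of these two particle configurations, and no further probabilistic input is required.
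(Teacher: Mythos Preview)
Your proposal is correct and follows essentially the same route as the paper: split $\mathcal{K}$ into vertical edges (Case 1) and the horizontal edges inserted by the diagonal-replacement rule, then match the latter to the two unbiased configurations of Case 2 by reading off the particle picture at the left endpoint from the backward construction (Case A versus Case B). The only discrepancy is a harmless labeling swap: in the paper the rising diagonal (your Case A) corresponds to the \emph{second} picture of Case 2 (no particle at the level of the left endpoint) and the downward diagonal (your Case B) to the \emph{first} picture (particle just above the left endpoint); your descriptions of the configurations are correct, only the tags ``first/second'' are interchanged.
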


\begin{proof}
Vertical edges of $\mathcal{K}$ are unbiased. This is Case $1$ above.
Let us now consider an horizontal edge of $\mathcal{K}$ associated with a rising diagonal edge of $\pi^\times$. By construction of $\pi^\times$ this corresponds to the following situation:
\begin{center}
\includegraphics[height=25mm]{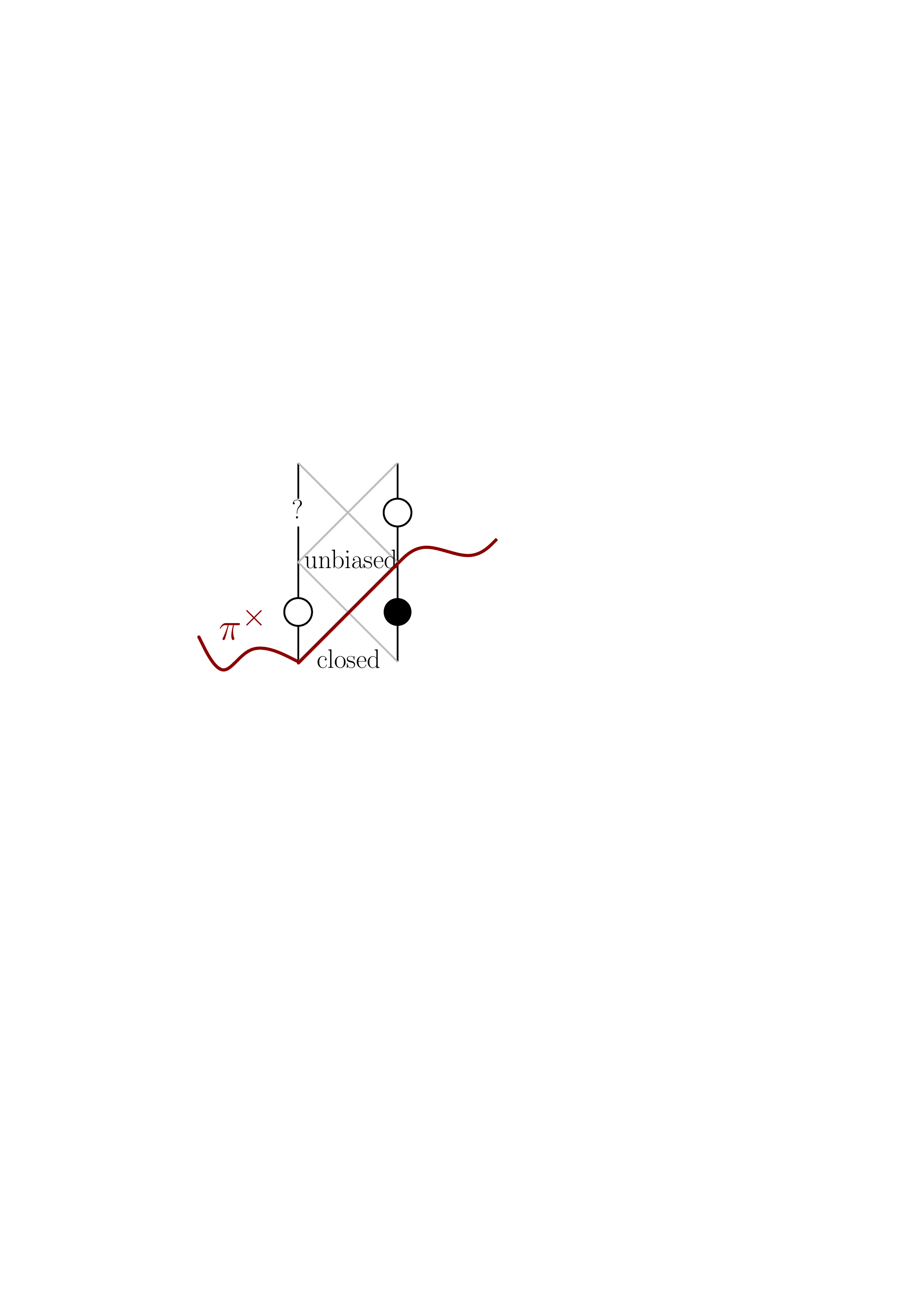}
\end{center}
This is the second situation in Case $2$ in the discussion just above.
The case of an horizontal edge of $\mathcal{K}$ associated with a downward diagonal of $\pi^\times$ corresponds to the first situation in Case $2$:
\begin{center}
\includegraphics[height=25mm]{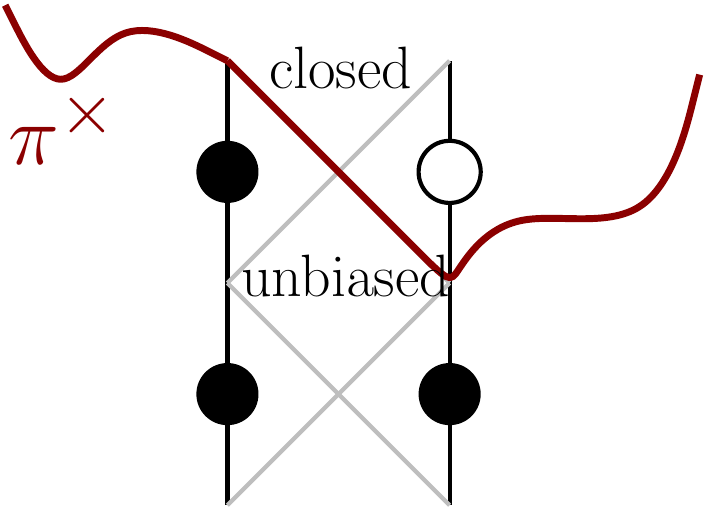}
\end{center}
\end{proof}

As a consequence of Lemma \ref{Lem:MajoMauvaisCarres}, conditional on $\mathcal{T}$, the number $B$ of bad edges is a binomial random variable with parameters $(K,\eps)$.
Using Lemma \ref{l:stic} we thus get that, for small enough $\eps$ and for large enough $n$, we have $\mathbb{E}(B) \le \mathbb{E}(K)\eps \le 2n\eps^2$.
The third item of the following lemma will enable us to bound the size of the detour associated with each bad edge $e \in \mathcal{B}$.
Items $1$ and $2$ are intermediate steps in the proof of Item $3$.

We say that $e^\star$ in $(\Z^2)^\star$ is unbiased if its dual edge $e$ is.

\begin{lem}\label{LemmeNath}
\begin{enumerate}
\item If an edge $e^\star$ is biased, its six neighbouring edges are unbiased.
\item If an animal $\mathcal{A}$ of $(\Z^2)^\star$ (\emph{i.e.} a connected component of edges in $(\Z^2)^\star$)
contains an edge of $\mathcal{K}$, then it contains at least $\max(1,|\mathcal{A}|/7)$ unbiased edges.
\item Conditional on $\mathcal{T}$, if $e$ belongs to $\mathcal{K}$, then
$$
\mathbb{E}\left(|C^\star(e)| 1_\mathcal{B}(e) \Big| \mathcal{T}\Big.\right) \le C\eps
$$
for small enough $\eps$ where $C$ is an absolute constant.
\end{enumerate}
\end{lem}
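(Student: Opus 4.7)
The plan is to attack the three items in order, using the case analysis of biased versus unbiased edges given just before the statement.

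For item 1, I would observe that a biased edge $e$ must be horizontal, since vertical edges fall under Case 1. Write $e=(i,j)\to(i+1,j)$; its state is forced by Case 3 or Case 4 precisely when the particle configuration at time $i$ satisfies $Y_i^{j}=\bullet$ and $Y_i^{j+1}=\circ$. The six neighbours of $e^\star$ in $(\Z^2)^\star$ correspond to the four vertical primal edges meeting $e$ at its two endpoints (always unbiased by Case 1) together with the two horizontal primal edges $(i,j\pm 1)\to(i+1,j\pm 1)$ one row above and below. The upper one would be biased only if $Y_i^{j+1}=\bullet$, the lower one only if $Y_i^{j}=\circ$, both of which are ruled out by the biasing pattern at row $j$. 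Hence all six neighbours of $e^\star$ are unbiased.

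For item 2, I plan a purely graph-theoretic argument based on item 1. Let $L$ be the line graph of $(\Z^2)^\star$, whose vertices are dual edges, with adjacency given by sharing an endpoint; then $L$ has maximum degree $6$. By item 1, the set of biased vertices is an independent set of $L$. Take a connected subgraph $\mathcal{A}$ of $L$ with $|\mathcal{A}|\ge 2$ and split it into biased $B$ and unbiased $U$. Connectivity of $\mathcal{A}$ forces each $v\in B$ to have at least one neighbour in $\mathcal{A}$, and any such neighbour necessarily lies in $U$. Double-counting the edges of $\mathcal{A}$ between $B$ and $U$ then yields $|B|\le 6|U|$, so $|U|\ge |\mathcal{A}|/7$. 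When $|\mathcal{A}|=1$, the unique edge of $\mathcal{A}$ lies in $\mathcal{K}$ and is therefore unbiased by Lemma \ref{Lem:MajoMauvaisCarres}.

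For item 3, I would run a Peierls-type argument conditional on $\mathcal{T}$. Since $e\in\mathcal{K}$ is unbiased, its state given $\mathcal{T}$ is Bernoulli with parameter $\eps$. Expanding the expectation over potential cluster members and union-bounding over self-avoiding paths in $L$ starting at $e^\star$ yields
\begin{equation*}
\mathbb{E}\bigl(|C^\star(e)|\,1_\mathcal{B}(e)\,\big|\,\mathcal{T}\bigr)
\ \le\ \sum_{k\ge 0}\ \sum_{\substack{\gamma\ \text{SAW from}\ e^\star \\ |\gamma|=k+1}}
\mathbb{P}\bigl(\gamma\text{ entirely closed}\,\big|\,\mathcal{T}\bigr).
\end{equation*}
Along any such $\gamma$, viewed as a path $P_{k+1}$ in $L$, the biased vertices form an independent set; since $e^\star$ itself is unbiased, at most $\lceil k/2\rceil$ of the $k+1$ edges of $\gamma$ are biased, hence at least $\lceil(k+1)/2\rceil$ are unbiased. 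Conditional on $\mathcal{T}$, biased edges contribute a deterministic $0/1$ factor and unbiased ones are closed independently with probability $\eps$, so the probability that $\gamma$ is entirely closed is at most $\eps^{\lceil(k+1)/2\rceil}$. Using the bound $6\cdot 5^{k-1}$ on the number of SAWs of length $k$ from $e^\star$ in $L$ and grouping the consecutive terms $k=2j,2j+1$ (which share the same exponent $j+1$) turns the resulting sum into a geometric series with ratio of order $\eps$, producing the announced bound $C\eps$ for $\eps$ small enough.

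The main delicate point is item 1, which hinges on correctly matching the pictorial case analysis with the particle configurations at rows $j\pm 1$; once it is established, items 2 and 3 reduce to standard combinatorial and Peierls-type estimates.
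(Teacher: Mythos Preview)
Your arguments for Items~1 and~2 are correct and essentially coincide with the paper's: the paper also checks Case~3/4 neighbourhoods explicitly for Item~1, and for Item~2 builds a map from each biased edge of $\mathcal{A}$ to an unbiased neighbour inside $\mathcal{A}$ (your line-graph double count is the same thing in different language).

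For Item~3 your proof is correct but follows a different Peierls route from the paper. The paper bounds $\mathbb{P}(|C^\star(e)|1_\mathcal{B}(e)\ge k\mid\mathcal{T})$ by a union over lattice \emph{animals} of size $k$ containing $e^\star$ (at most $15^k$ of them), invokes Item~2 to guarantee each such animal has at least $\max(1,k/7)$ unbiased edges, and sums $15^k\eps^{\max(1,k/7)}$. You instead expand $|C^\star(e)|$ over self-avoiding walks in the line graph and use Item~1 directly (biased edges form an independent set along the path) together with the fact that $e^\star\in\mathcal{K}$ is unbiased, yielding an exponent $\lceil(k+1)/2\rceil$ rather than $k/7$. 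Both give the claimed $C\eps$; your version is a bit sharper (it works for $\eps$ of order $1/25$ rather than $15^{-7}$) and makes Item~2 logically unnecessary for Item~3, whereas the paper's animal count is what motivates stating Item~2 in the first place.
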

\begin{proof}
Item $1$. Take a biased edge $e^\star$. It corresponds either to Case $3$ above either to Case $4$.
As the proofs are identical, let us consider Case $3$.
It corresponds to the following situation, where we draw in red the dual edges of the six neighbouring edges of $e^\star$:
$$
\includegraphics[height=25mm]{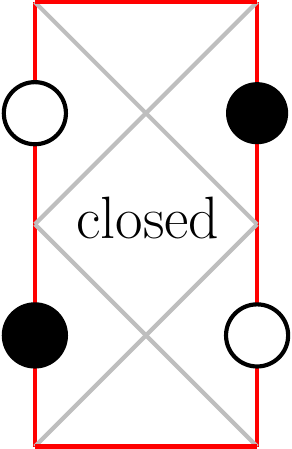}.
$$
Four of them are vertical and thus unbiased.
The horizontal edge above corresponds to the second situation of Case 2, whereas the horizontal edge below corresponds to the first situation of Case 2.

Item $2$. Let $\mathcal{A}$ be an animal of $(\Z^2)^\star$ and assume that $\mathcal{A}$ contains an edge of $\mathcal{K}$.
As all edges of $\mathcal{K}$ are unbiased by Lemma \ref{Lem:MajoMauvaisCarres},
the required lower bound is straightforward if the cardinality of $\mathcal{A}$ is $1$.
Let us assume that the cardinality of $\mathcal{A}$ is at least $2$.
By Item $1$, we can then construct an application which associates to a biased edge of $\mathcal{A}$ one of its unbiased neighbours in $\mathcal{A}$.
At most $6$ biased edges are mapped to the same unbiased edge (this is not optimal, one could replace $6$ by $2$).
Thus, $\mathcal{A}$ contains at least $\mathcal{A}/7$ unbiased edges.

Item $3$.
Let us condition on ${\cal T}$.
Let $e \in \mathcal{K}$.
There are less than $15^k$ animals of cardinality $k$ containing the edge $e^\star$.
(This can be proven for instance by adapting slightly the arguments of (4.24) p.81 in \cite{Grimmett}.)
Using Item 2, we thus get:
\begin{eqnarray*}
\mathbb{E}\left(|C^\star(e)| 1_\mathcal{B}(e) \Big| \mathcal{T}\Big.\right)
 & = & \sum_{k \ge 1} \mathbb{P}\left( |C^\star(e)| 1_\mathcal{B}(e) \ge k \Big| \mathcal{T}\Big.\right) \\
 & \le & \sum_{k \ge 1} 15^k \eps^{\max(1,k/7)} \\
 & \le & C\eps
\end{eqnarray*}
for an absolute constant $C$ and for small enough $\eps$.
\end{proof}

We are now in position to conclude the proof of Theorem \ref{TheoPrincipal} by giving an upper bound for $D(\mathbf{0},(n,n\lambda\varepsilon))$.

\begin{proof}[Proof of Theorem \ref{TheoPrincipal}]
Since an edge in $\mathcal{C}^\star(e)$ is connected to less than $6$ open edges,
$$
|\Delta\mathcal{C}^\star(e)|\le 6|\mathcal{C}^\star(e)|.
$$
Therefore,
$$
|\Delta\mathcal{C}(\pi)| \le \sum_{e \in \mathcal{B}} |\Delta\mathcal{C}(e)|
= \sum_{e \in \mathcal{B}} |\Delta\mathcal{C}^\star(e)|
\le 6\sum_{e \in \mathcal{B}} |\mathcal{C}^\star(e)|.
$$
Using Lemma \ref{LemmeNath} and Lemma \ref{l:stic} we thus get, for small enough $\eps$ and large enough $n$,
\begin{equation}\label{e:ah}
\mathbb{E}\left(|\Delta\mathcal{C}(\pi)| \right)
\le 6 \mathbb{E}\left(\sum_{e \in \mathcal{K}} \mathbb{E}\left(|C^\star(e)| 1_\mathcal{B}(e) \Big| \mathcal{T}\Big.\right) \right)
\le 6 C\eps \mathbb{E}(K) \le 12Cn\eps^2.
\end{equation}

Set $\mathcal{E}_n=\set{\mathbf{0}\leftrightarrow (n,n\lambda\varepsilon)\leftrightarrow \infty}$.
We deduce from Proposition \ref{Claim} that on the event $\mathcal{E}_n$,
$$
D(\mathbf{0},(n,n\lambda\varepsilon))\le |\pi|+|\Delta\mathcal{C}(\pi)| = D^\times(n,n\lambda\varepsilon)+|\Delta\mathcal{C}(\pi)|.
$$
This yields, for all $A>0$,
\begin{multline*}
\mathbb{P}(D(\mathbf{0},(n,n\lambda\varepsilon))\ge nf(\lambda,\eps) + An\eps^2,\mathcal{E}_n) \\
\leq \mathbb{P}(D^\times(n,n\lambda\varepsilon)\ge nf(\lambda,\eps) + An\eps^2/2) + \mathbb{P}(|\Delta\mathcal{C}(\pi)| \ge An\eps^2/2) \\
\leq \mathbb{P}(D^\times(n,n\lambda\varepsilon)\ge nf(\lambda,\eps) + An\eps^2/2) + \frac{\mathbb{E}(|\Delta\mathcal{C}(\pi)|)}{An\eps^2/2}.
\end{multline*}
From Corollary \ref{Coro:AsymptotiqueCrossModel} and \eqref{e:ah} we get, for small enough $\eps$,
$$
\limsup_{n \to \infty} \mathbb{P}(D(\mathbf{0},(n,n\lambda\varepsilon))\ge nf(\lambda,\eps) + An\eps^2,\mathcal{E}_n) \le 24C/A.
$$
Note that for large $n$, $\mathbb{P}(\mathcal{E}_n)\geq 1/2$. Since we know that $D(\mathbf{0},(n,n\lambda\varepsilon))/n$ converges almost surely to the constant $\mu_{1-\eps}(\lambda)$, we get, for $A>48C$,
$$
\mu_{1-\eps}(\lambda) \le f(\lambda,\eps)+ An\eps^2.
$$
\end{proof}


\subsection*{Appendix: Proof of Proposition \ref{Claim}}
\begin{proof}
Let $e_1$ be an edge in $\mathcal{B}$, we denote  by $(\Delta \mathcal{C}(e_1))_\infty$ the set of edges $e$ of $\Delta \mathcal{C}(e_1)$ for which there exists a path in $\mathbb{Z}^2$ (which may take open or closed edges) such that
\begin{itemize}
\item the edge $e$ is the first edge of the path,
\item the path does not cross $\mathcal{C}^\star(e_1)$,
\item the path goes to infinity.
\end{itemize}
In other words, $(\Delta \mathcal{C}(e_1))_\infty$ is composed of the edges of $\Delta \mathcal{C}(e_1)$ that are not in the interior of $\mathcal{C}^\star(e_1)$.

\vspace{2mm}
\begin{center}
\includegraphics[width=95mm]{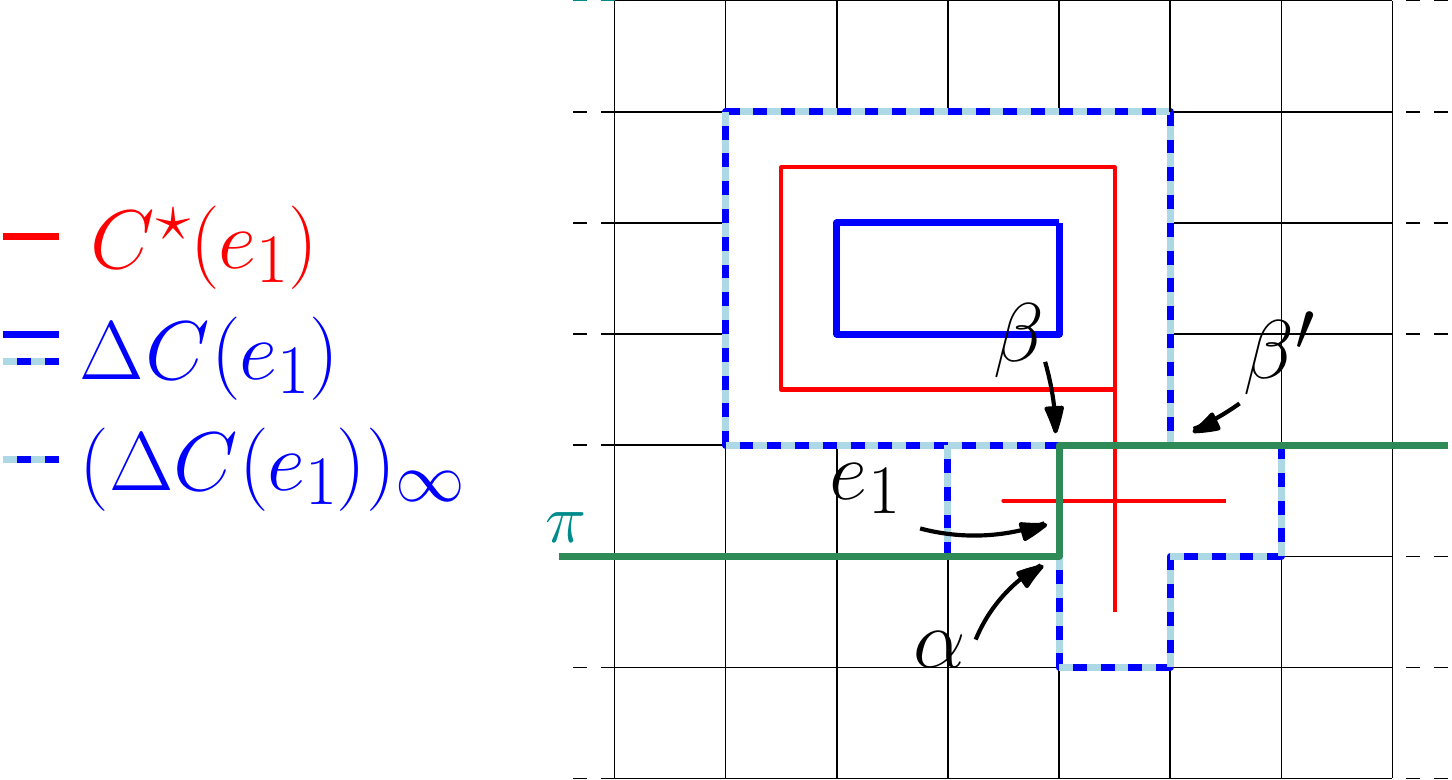}
\end{center}
\vspace{2mm}

A first observation is that the set $(\Delta \mathcal{C}(e_1))_\infty$ is connected. Take it for granted for a while and let us prove that there exists a path from $\mathbf{0}$ to $(n,n\lambda\varepsilon)$ which only uses edges of $\pi\setminus \set{e_1}$ or edges of $(\Delta\mathcal{C}(e_1))_\infty$. Let $(\alpha,\beta)$ be the first edge of $\pi$ crossing $\mathcal{C}^\star(e_1)$. One can check that $\alpha$ is necessarily the extremity of an edge of $(\Delta \mathcal{C}(e_1))_\infty$. Following the path $\pi$ backwards from $(n,n\lambda\varepsilon)$, we define similarly  another site  $\beta'\in (\Delta \mathcal{C}(e_1))_\infty$. The origin $\mathbf{0}$ is connected to $\alpha$ by $\pi$, $\alpha$ is connected to $\beta'$ by $(\Delta \mathcal{C}(e_1))_\infty$, and $\beta'$ is connected to $(n,n\lambda\varepsilon)$ by $\pi$. We proceed recursively for the next edges of $\mathcal{B}$.

It remains to prove that $(\Delta \mathcal{C}(e_1))_\infty$ is connected.
Let $\Sigma(\mathcal{C}^\star(e_1))\in \mathbb{Z}^2$ be the circuit surrounding $\mathcal{C}^\star(e_1)$ as defined in Proposition 11.2 of \cite{Grimmett}. By construction, $\Sigma(\mathcal{C}^\star(e_1))\subset
(\Delta \mathcal{C}(e_1))_\infty$. Assume that $(\Delta \mathcal{C}(e_1))_\infty$ is  not connected and let $H$ be a connected component which does not contain $\Sigma(\mathcal{C}^\star(e_1))$. The set $H$ being a connected component of $(\Delta \mathcal{C}(e_1))_\infty$, an edge in $\Delta H$ can not be in $\Delta \mathcal{C}(e_1)$.  This implies in particular that either
\begin{itemize}
\item[(i)]$\Sigma(H)\subset \mathcal{C}^\star(e_1)$
\item[(ii)] $\Sigma(H)$ and $\mathcal{C}^\star(e_1)$ have no site in common.
\end{itemize}
The first possibility contradicts the connection of $H$ with infinity, whereas the second one contradicts the connectivity of $\mathcal{C}^\star(e_1)$.
\end{proof}

\noindent {\bf Aknowledgements.} The two first authors are glad to thank ANR Grant \textsc{ Mememo 2} for the support.


\noindent \textsc{Anne-Laure Basdevant} \verb|anne-laure.basdevant@u-paris10.fr|,\\
\textsc{Nathana\"el Enriquez} \verb|nenriquez@u-paris10.fr|, \\
\textsc{Lucas Gerin} \verb|lgerin@u-paris10.fr|\\
Universit\'e Paris-Ouest Nanterre\\
Laboratoire Modal'X\\ 200 avenue de la R\'epublique\\
92000 Nanterre (France).

\vspace{5mm}

\noindent \textsc{Jean-Baptiste Gou\'er\'e} \verb|jean-baptiste.gouere@univ-orleans.fr|\\
MAPMO - F\'ed\'eration Denis Poisson\\
Universit\'e d'Orl\'eans\\
B.P. 6759 - 45067 Orl\'eans (France)

\end{document}